\newtheorem{proposition}{Proposition}[section]
\newtheorem{theorem}[proposition]{Theorem}
\newtheorem{lemma}[proposition]{Lemma}
\newtheorem{corollary}[proposition]{Corollary}
\theoremstyle{definition}
\theoremstyle{remark}
\newtheorem{remark}[proposition]{Remark}
\numberwithin{equation}{section}
\newcommand{\dd}{{\rm\,d}}
\newcommand{\dx}{{\rm\,d}x}
\newcommand{\ds}{{\rm\,d}s}
\newcommand{\dsigma}{{\rm\,d}\sigma}
\newcommand{\R}{\mathbb{R}}
\newcommand{\Y}{\mathscr{Y}}
\newcommand{\PM}{\mathcal{P}\!\mathcal{M}}
\newcommand{\III}[1]{{\left\vert\kern-0.25ex\left\vert\kern-0.25ex\left\vert #1 
    \right\vert\kern-0.25ex\right\vert\kern-0.25ex\right\vert}}
\DeclareMathOperator{\esssup}{ess\,sup}
\begin{document}
     \baselineskip=13pt
\title{
Large global solutions of the parabolic-parabolic \\ Keller--Segel system
 in higher dimensions} 
\author{Piotr Biler$^1$, Alexandre Boritchev$^2$ and Lorenzo Brandolese$^2$\\ 
\small{ $^1$ Instytut Matematyczny, Uniwersytet Wroc{\l}awski,}\\ 
\small{ pl. Grunwaldzki 2, 50--384 Wroc{\l}aw, POLAND}\\ 
\small{ $^2$   Université Claude Bernard Lyon 1,}\\
\small{CNRS UMR 5208, Institut Camille Jordan,}\\
\small{F-69622 Villeurbanne, FRANCE }\\ 
}

\date{\today}
\maketitle

\begin{abstract}
We study the global existence of the parabolic-parabolic Keller--Segel system
in $\R^d$, $d\ge2$. We prove that initial data of arbitrary size give rise to global solutions provided the diffusion parameter $\tau$ is large enough in the equation for the chemoattractant. This fact was observed before in the two-dimensional case by Biler, Guerra \& Karch (2015) and Corrias, Escobedo \& Matos (2014). Our analysis improves earlier results 
and extends them to any dimension $d\ge3$.
Our size conditions on the initial data for the global existence of solutions seem to be optimal, up to a logarithmic factor in $\tau$, when $\tau\gg1$: we illustrate this fact by introducing two toy models, both consisting of systems of two parabolic equations, obtained after a slight modification of the nonlinearity of the usual  Keller--Segel system. For these toy models,  we establish in a companion paper \cite{BBB2} 
finite time blowup for a class of large solutions.
\end{abstract}

\section{Introduction}

This paper is concerned with the simplest doubly parabolic Keller--Segel system 
\begin{equation}
\left\{
\begin{aligned}
&u_t =\Delta u-\nabla\cdot(u\nabla \varphi),\\
&\tau \varphi_t=\Delta \varphi+u,\\
&u(0)=u_0,\ \  \varphi(0)=\varphi_0,
\end{aligned}
\right.
\qquad x\in\R^d,\ t>0,
\tag{PP}
\end{equation}
where $\tau>0$. 
There are two topics of interest: global-in-time existence of solutions {\em versus} finite time blowup of solutions. 
The biological motivations to consider the system {\rm(PP)} are related to the model of chemotaxis, i.e. motion of microorganisms of density $u=u(x,t)\ge 0$ which are subject to diffusion described by the Laplacian $\Delta u$ and drift along the gradient of the chemoattractant density $\nabla\varphi$ --- a chemical secreted by themselves --- playing a role of the  information carrier in some sense. 
The most interesting situation in applications is that with either small coefficient $0<\tau\ll 1$, or $\tau=0$. This means that diffusion for the chemoattractant is much faster than for cells, or even instantaneous if $\tau=0$, which leads to the parabolic-elliptic Keller--Segel system:
\begin{equation}
\left\{
\begin{aligned}
&u_t =\Delta u-\nabla\cdot(u\nabla \varphi),\\
&\Delta \varphi+u=0,\\
&u(0)=u_0.  
\end{aligned}
\right.
\qquad x\in\R^d,\ t>0,
\tag{PE}
\end{equation}
Results on the continuity of solutions of those systems with respect to the parameter $\tau$, in the limit $\tau\searrow 0$, are available for solutions with sufficiently small initial data, see e.g. \cite{Rac,BB2,Lem}. 
Note that the case $\tau=0$ is also motivated by applications in astrophysics when massive particles in a cloud of matter (say, a star, or a nebula) are attracted through the gravitational potential created by themselves, responding  instantaneously to the configuration of particles that evolves in time, see \cite{CSR}. 
In this model, that is the parabolic-elliptic  Keller--Segel system (PE), initial data which are large in a suitable sense lead to the phenomenon of finite time blowup of solutions. This is an interesting property from the viewpoint of the mathematical modelization in biology, which also has a striking purely mathematical meaning summarized briefly by saying that transport prevails over diffusion. There are many results on the blowup for the parabolic-elliptic Keller--Segel model, beginning with the pioneering observations in \cite{JL}, through numerous papers of the Japanese school by T. Nagai, Y. Naito, T. Senba, T. Suzuki, up to newest results on radially symmetric solutions in \cite{N2021,SW}. 
The two questions: {\em ``What are sufficient conditions on the initial data for the existence of global-in-time solutions~?"} and {\em ``What are sufficient conditions on the initial data for a finite time blowup~?"} are closely related and lead to results of (partial) dichotomy; those conditions are, in a sense,  complementary, see e.g. \cite{B-book} for a review of recent results.

The analogous questions for the doubly parabolic system {\rm(PP)} lead to a quite satisfactory theory of the existence of local and global-in-time solutions (for references, see the review in \cite{B-book}) culminating in, e.g. \cite{CCE,Lem}, ad also \cite{Tak}. 
On the other hand, the blowup is not so well understood since many of standard by now methods for single parabolic equations (see \cite{QS}) and parabolic-elliptic Keller--Segel system fail for the doubly parabolic Keller--Segel model, see however \cite{CEM} for concentration phenomena. 
For newest results on blowups  we refer to  \cite{Wi1,Wi2} where  the radially symmetric problem is considered in a ball, and in \cite{Wi3} in the whole space.  A supplementary information on $L^1$ solutions is derived from entropy functionals and other specific properties of those drift-diffusion systems.
The problem of proving blowup of solutions without specific regularity and symmetry properties for some parabolic systems 
requires perhaps new methods. 

\subsection*{Overview of the results}  

The purpose of this paper is to provide size conditions on the initial data $u_0$ and $\varphi_0$ in appropriate scale invariant norms, and explicitly depending on the parameter $\tau$, guaranteeing that solutions to (PP) are global-in-time. 
Of special mathematical interest
are the two asymptotic regimes $0<\tau\ll1$ and $\tau\gg1$.

A larger $\tau$ leads to a stronger dissipation and one should expect in this case the global-in-time existence of solutions for larger initial data $u_0$.
Results in this directions were obtained in the two-dimensional case in \cite{BCD,BGK,CEM}. Our goal is twofold: first we would like to extend these results to the case $d\ge3$. Second, for the two dimensional case, we would like to improve the admissible size conditions on the initial data available in the literature.

For instance, for large $\tau$, in any dimension $d\ge2$ we will be able to construct global solutions
under size conditions in pseudomeasure norms (see next section for the definition)
of the form
\begin{equation}
\label{in:phi1}
\|u_0\|_{\PM^{d-2}}\lesssim\tau/(\ln\tau)^3
\quad \text{($\tau\gg 1$)},
\end{equation}
for an initial concentration $\varphi_0$ satisfying appropriate size conditions
in another pseudomeasure space.
In particular,  the initial density $u_0$ can be taken arbitrarily large when
$\tau\gg1$.
The closest results available  in the literature are those of~\cite{BGK, CEM} (just in the case $d=2$).
But in these references, the size condition on $u_0$ was considerably more restrictive (of the form $O(\tau^{1/2-\epsilon}$ with $\epsilon>0$) for the norm of $u_0$, so our results improve earlier studies by a factor better than $\sqrt \tau$.

\medskip
In the other asymptotic regime, $0<\tau\ll 1$, the situation is different.
We can construct global solutions under size conditions of the form
\begin{equation}
\label{in:phi0}
\begin{cases}
\|u_0\|_{\PM^{d-2}}\lesssim 1\\
\|\nabla\varphi_0\|_{\PM^{d-1}}\lesssim \max\{1,\frac{1}{\sqrt\tau}\}
\end{cases}
\quad(\tau>0),
\end{equation}
for $d\ge3$, and even for $d=2$ with a logarithmic correction in $\tau$.
In particular, the initial concentration $\varphi_0$ can be taken arbitrarily large when $\tau$ is small.
Of course, our analysis  of the global existence to (PP) in the case $0<\tau\ll 1$ is
related to the convergence problem of solutions to (PP) to those to (PE), as $\tau\searrow0$, addressed in~\cite{Rac,BB2,Lem}.

In order to illustrate the importance of the choice of the function spaces  for constructing global solutions, we address the global existence problem also under the more general assumption that the initial data $u_0$ belong to appropriate homogeneous Besov spaces. For illustration purpose, it is sufficient to limit ourselves to the 
model case 
$\varphi_0=0$, $u_0\not=0$.
The Besov spaces that we use are larger than the corresponding pseudomeasure spaces,
but they are still not optimal (in the sense of the inclusion). For the physically relevant case $u_0\ge0$, the well-posedness in the largest possible
scale invariant function space 
was successfully addressed in~\cite{Lem}, using a space of Morrey type.

But the price to pay when dealing with rougher spaces is important: 
the weaker the norm $\|\cdot\|$, the more stringent asymptotically as $\tau\to+\infty$ is the size condition on $\|u_0\|$ that one has to prescribe.
In the setting of  Besov spaces our best size condition is
$\|u_0\|_{\dot B^{-(2-d/p)}_{p,\infty}}\lesssim C_p\sqrt \tau$ (with $p\in(d,2d)$),
which is much worse than~\eqref{in:phi1} when $\tau\gg 1$.
In the optimal Morrey space $\dot M^{d/2}$, the size condition turns out to be even more stringent for large~$\tau$: indeed, the main result of~\cite{Lem} requires 
$\|u_0\|_{\dot M^{d/2}}\lesssim 1$.

Pseudomeasure spaces thus seem to be a good compromise. 
They allow to better take advantage of the role of the parameter $\tau$. Yet, the space $\PM^{d-2}$ is large enough to encompass homogeneous distributions of degree $-2$ that give rise to self-similar solutions.
In fact, our original motivation to study {\rm(PP)} with large $\tau$ was a characterization of the initial data $u_0=M\delta_0$ in $d=2$ leading to the existence (and sometimes nonuniqueness) of self-similar solutions to {\rm(PP)} in \cite{BCD}, as well as \cite{BGK} again in $d=2$; see Corollary \ref{sss}.

Our main results are Theorem~\ref{th:PM}, where
we construct global solutions in pseudomeasure spaces, and  
Theorem~\ref{BesT1}. The latter deals with Besov spaces and can be viewed as
the counterpart, for $\tau>0$, of an analogous result established in~\cite{I}
for the parabolic-elliptic system (PE).

All our results remain valid for sign-changing solutions, as we do not need to put the conditions 
$u_0\ge0$, or $\varphi_0\ge0$. They remain valid also when an additional damping is put in the equation
for $\varphi$, i.e., when the second equation in (PP) is rewritten as
$\tau\varphi_t=\Delta \varphi+u-\alpha\varphi$, with $\alpha\ge0$. Indeed,
the contribution of this damping term can simply be dropped in all our estimates. In fact, taking $\alpha>0$ could possibly lead to some improvements on our size conditions.



\subsection*{Further developments}

In a companion paper \cite{BBB2} we deal with parabolic systems {\rm (TM)} and
 {\rm (TM')} below, in which the nonlinear term $\nabla\cdot(u\nabla \varphi)$
is replaced by a nonlinearity of the same order and scaling, but without drift structure. 
The first model is
\begin{equation}
\left\{
\begin{aligned}
&u_t =\Delta u-u \Delta \varphi,\\
&\tau \varphi_t=\Delta \varphi+u,\\
&u(0)=u_0,\ \  \varphi(0)=\varphi_0,
\end{aligned}
\right.
\qquad x\in\R^d,\ t>0,
\tag{TM}
\end{equation}
The second model is
\begin{equation}
\left\{
\begin{aligned}
&u_t =\Delta u+(\Delta\varphi)^2,\\
&\tau \varphi_t=\Delta \varphi+u,\\
&u(0)=u_0,\ \  \varphi(0)=\varphi_0,
\end{aligned}
\right.
\qquad x\in\R^d,\ t>0,
\tag{TM'}
\end{equation}
Both models degenerate into the quadratic nonlinear heat equation in the parabolic-elliptic limit 
$\tau=0$, when one removes the initial condition on $\varphi$.
So they are related to the Cauchy problem for the quadratic heat equation
\begin{equation}\tag{NLH}
\left\{\begin{aligned}
&u_t =\Delta u+u^2,\quad\\
&u(0)=u_0,\quad
\end{aligned}
\quad \qquad \qquad x\in\R^d,\ t>0,
\right.
\end{equation}
in a way similar to that as {\rm(PP)} relates to (PE).
Moreover, under the compatibility condition $\Delta\varphi_0+u_0=0$,
the steady states of (TM) and (TM') agree with those of (NLH).

For both toy models, the existence theory that we present in this paper for (PP)  applies with only slight changes. 
The interesting feature of (TM) and (TM') is that the blowup problem  is much better understood than for (PP):
in the companion paper~\cite{BBB2} we will provide 
some explicit blowup criteria for initial data of large size, not necessarily radial, using different methods.
The blowup analysis for (TM) and (TM') illustrates also the nearly optimal character (both in terms of the regularity  and the size of the initial data)
of our global existence results. 
In particular, it suggests that the size condition~\eqref{in:phi1} on $u_0$ for the global existence of (PP) is optimal for large $\tau$, up to a logarithmic factor.

\subsection*{Notation}

In this paper we adopt the following notation and conventions. The expression $A\lesssim B$, where $A$ and $B$ may depend on several parameters, means that there exists a constant $c>0$, dependent only on the space dimension, such that $A\le c\,B$. When both $A\lesssim B$ and $B\lesssim A$
we will write $A\approx B$. 
The notation $a\wedge b$ stands for $\min\{a,b\}$,

The space of linear and continuous operators from a Banach space $Y$ to another Banach space~$Z$ is denoted 
by $\mathscr{L}(Y,Z)$. This is a Banach space when it is endowed with the natural norm $\|T\|_{\mathscr{L}(Y,Z)}=\sup_{\|u\|_Y=1}\|Tu\|_Z$. When $Z=Y$ we simply write $\mathscr{L}(Y)$ instead of $\mathscr{L}(Y,Z)$.

For a bilinear and continuous operator $B$ from $Y\times Y$ to $Y$ we denote
by $|\!|\!|B|\!|\!|$ its bilinear operator norm, i.e., 
$|\!|\!|B|\!|\!|=\sup \|B(u,v)\|_Y$, where the supremum is taken over all $u,v\in Y$ such that
$\|u\|_Y=\|v\|_Y=1$.

For a function $f\in L^1(\R^d)$, 
the definition of the Fourier transform that we use is 
$\widehat f(\xi)=\int f(x)\exp(-i\xi\cdot x)\dd x$. This definition is extended
to $\mathscr{S}'(\R^n)$, the space of tempered distributions, in the usual way.
The space of general distributions is denoted $\mathscr{D}'(\R^d)$.

We denote by $M^s_q(\R^d)$ the homogeneous Morrey spaces. For $1\le q\le s<\infty$ these are normed by
\[
\|u\|_{M^s_q}=\Bigl(\sup_{R>0,x\in\R^d} R^{d(q/s-1)}\int_{|y-x|<R}|u(y)|^q\dd y\Bigr)^{1/q}.
\]
When $q=1$, we write simply $M^s$ instead of $M^{s}_1$.

In this paper we will deal with mild solutions. 
These are solutions of the integral formulation of (PP). 
The exact meaning of the integral must be understood in the specific functional setting. For example, in the setting of Theorem~\ref{th:PM},
by definition, a mild solution to (PP) is a map $u\in L^1_{\rm loc}(0,\infty;\mathscr{S}'(\R^d))$ 
satisfying~equation \eqref{Fsol'} below for all $t\ge0$ and a.e. 
$\xi\in\R^d$.
This definition allows us to disregard $\varphi$, so that $u$ plays a predominant role in (PP).

\section{Study of {\rm (PP)} in pseudomeasure spaces}

Consider the parabolic-parabolic Keller--Segel system {\rm (PP)}
with $\tau>0$. 
Let $0\le a<d$. We introduce the pseudomeasure space
\begin{equation}
\label{PMa}
\PM^a=
\{f\in \mathscr{S}'(\R^d)\colon \|f\|_{\PM^a}=\esssup_{\xi\in\R^d}|\xi|^a|\widehat f(\xi)|<\infty\},
\end{equation}
where $\widehat f$ denotes the Fourier transform for the tempered distribution $f$.
The idea of using pseudomeasure spaces $\PM^a$ for studying the Keller--Segel system goes back to~\cite{BCGK}, and classically pseudomeasures with $a=0$ have been considered in harmonic analysis. 
The space $\PM^{d-2}$ 
plays a predominant role for $u$, because of the scale invariance $u(x,t) \mapsto \lambda^2u(\lambda x,\lambda^2 t)$ of the equation, and the fact that 
\[
\|\lambda^2u_0(\lambda\cdot)\|_{\PM^{d-2}}
=\|u_0\|_{\PM^{d-2}}
\]
for any $\lambda>0$.
For this reason, we will prescribe a size condition using this norm.

On the other hand, the scaling for $\varphi$ is $\varphi(x,t)\mapsto \varphi(\lambda x,\lambda^2 t)$: as $\Delta\varphi$ scales as $u$, one could be tempted to prescribe a size condition on $\|\varphi_0\|_{\PM^d}$. But pseudomeasure spaces $\PM^a$ are not well defined when $a\ge d$, as $|\xi|^{-a}$ does no longer belong to $L^1_{\rm loc}(\R^d)$. We can conveniently circumvent this obstruction by assuming 
$\varphi_0\in\mathscr{S}'(\R^d)$ and
 putting a size condition on $\|\nabla\varphi_0\|_{\PM^{d-1}}$.
In fact, only $\nabla \varphi$ plays a role in the equation for $u$.

We will construct our solutions in the space which, again, is scale-invariant.
\begin{equation}
\Y_a=\{u\in L^\infty_{\rm loc}(0,\infty;\mathscr{S}'(\R^d))\colon
\|u\|_{\Y_a}=\esssup_{t>0,\,\xi\in\R^d}t^{1+(a-d)/2}|\xi|^a|\widehat u(\xi,t)|<\infty\}.
\end{equation}
When $a=d-2$, the space $\Y_{d-2}$ agrees with the space 
\[
\mathcal{X}=L^\infty(0,\infty;\PM^{d-2}),
\]
already used in \cite[Theorem 2.1]{BCGK} to establish a global existence  result
for the parabolic-elliptic Keller--Segel system for $d\ge4$
and small initial data in $\PM^{d-2}$.
When $a\not=d-2$, our space $\Y_a$ is slightly larger than the space
\[
\mathcal{Y}_a:=\Y_a\cap\mathcal{X}
\]
 considered in \cite[Section 4]{BCGK}  $(d\ge3)$
or in \cite{Rac} $(d=2)$.
Such a slight change in the choice of the functional setting for the relevant bilinear estimates will have, however, important consequences.
Constructing the solution in the larger space $\Y_a$, instead of $\mathcal{Y}_a$, implies that we do not have to care about the estimates of the $\mathcal{X}$-norm: this represents a crucial advantage when $\tau$ is large, because the bilinear operator norm 
in $\Y_a$ does go to zero as $\tau\to+\infty$, while that of the bilinear operator norm
in $\mathcal{Y}_a$ does not. Moreover, the solution constructed in $\Y_a$ {\it a~fortiori} also belongs to
$\mathcal{X}$, so, in fact, no information is lost at the end.

\begin{theorem}
\label{th:PM}
\begin{itemize}
\item[(i)]
Let $d\ge2$.
There exist two constants  $\kappa_d$ 
and $\tilde\kappa_d>0$
such that,
if $\tau>0$ and $u_0\in \PM^{d-2}(\R^d)$, 
$\varphi_0\in \mathscr{S}'(\R^d)$ 
satisfy one of the following size conditions
\begin{subequations}
\begin{equation}
\begin{aligned}
\label{small-assu0}
\|u_0\|_{\PM^{d-2}} &<\kappa_d, \\
\sqrt\tau\,\|\nabla\varphi_0\|_{\PM^{d-1}} &<\tilde\kappa_d
\\
\end{aligned}
\qquad 
(0<\tau\le 1)
\end{equation}
(when $d=2$, the second condition in~\eqref{small-assu0} has to be replaced by the 
more stringent condition
\[
|\ln\textstyle\frac{\tau}{{\rm e}}|\sqrt{\tau}\,\|\nabla\varphi_0\|_{\PM^{d-1}} 
<\tilde\kappa_2, \qquad \text{($d=2$,\quad$0<\tau\le1$}
))\]
or otherwise,
\begin{equation}
\begin{aligned}
\label{small-assu}
&\|u_0\|_{\PM^{d-2}} <\kappa_d\, b^3\tau^{1-b}, \\
&\|\nabla\varphi_0\|_{\PM^{d-1}} <\tilde \kappa_d \, b^2 \\
\end{aligned}
\qquad 
(\text{for $\tau\ge 1$ and some $0<b\le1$}),
\end{equation}
then  {\rm(PP)} possesses a global mild solution~$u\in \mathcal{X}$.
\end{subequations}
\item[(ii)]
There exists $a\in [d-2,d)$ and $R>0$ such that
such a solution belongs to 
$\{v\in \Y_{a}\colon \|v\|_{\Y_{a}}<R\}$, and is uniquely
defined in this ball.
\end{itemize}
\end{theorem}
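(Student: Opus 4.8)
The plan is to eliminate $\varphi$ and recast (PP) as a single fixed-point equation for $u$. Solving the second equation by Duhamel's formula yields $\varphi(t)=e^{(t/\tau)\Delta}\varphi_0+\tfrac1\tau\int_0^t e^{((t-s)/\tau)\Delta}u(s)\dd s$, hence
\begin{equation*}
\nabla\varphi(t)=e^{(t/\tau)\Delta}\nabla\varphi_0+\frac1\tau\int_0^t \nabla e^{((t-s)/\tau)\Delta}u(s)\dd s.
\end{equation*}
Inserting this into the Duhamel formula $u(t)=e^{t\Delta}u_0-\int_0^t e^{(t-s)\Delta}\nabla\cdot(u\nabla\varphi)(s)\dd s$ splits the drift into a part carrying the external field $\nabla\varphi_0$ and a part quadratic in $u$, so that I would write the equation as
\begin{equation*}
u=e^{t\Delta}u_0+L(u)+B(u,u),
\end{equation*}
where $L$ is the linear operator encoding the drift $\nabla e^{(s/\tau)\Delta}\varphi_0$ and $B$ is the bilinear operator encoding the self-consistent drift. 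Both are made fully explicit on the Fourier side, where the two semigroups act by multiplication by $e^{-t|\xi|^2}$ and $e^{-(t/\tau)|\xi|^2}$ and the product $u\nabla\varphi$ becomes a convolution.

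The abstract engine is the standard quadratic fixed-point lemma: in a Banach space $Y$, the equation $u=y+L(u)+B(u,u)$ with $\|L\|_{\mathscr{L}(Y)}<1$ can be rewritten as $u=(I-L)^{-1}y+(I-L)^{-1}B(u,u)$, which possesses a unique small solution once $4\,\III{(I-L)^{-1}B}\,\|(I-L)^{-1}y\|<1$, with $\|u\|_Y\lesssim\|(I-L)^{-1}y\|$. I would apply this in $Y=\Y_a$ with $y=e^{t\Delta}u_0$. The workhorse for every estimate is the pseudomeasure product rule: for $0<c,c'<d$ with $c+c'>d$, the convolution identity $\int_{\R^d}|\xi-\eta|^{-c}|\eta|^{-c'}\dd\eta=C_{c,c'}|\xi|^{d-c-c'}$ gives $\|fg\|_{\PM^{c+c'-d}}\lesssim\|f\|_{\PM^c}\|g\|_{\PM^{c'}}$. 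Taking $c=a$ for $u$ and $c'=d-1$ for $\nabla\varphi$, the product $u\,\nabla\varphi$ lands in $\PM^{a-1}$ and the divergence restores the order $a$, so $L$ and $B$ indeed map $\Y_a$ into itself.

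Three estimates then have to be carried out, each reducing on the Fourier side to controlling $t^{1+(a-d)/2}|\xi|^a|\widehat{(\cdot)}(\xi,t)|$ by the relevant product of norms. First, the linear data bound $\|e^{t\Delta}u_0\|_{\Y_a}\lesssim\|u_0\|_{\PM^{d-2}}$, which is immediate from $|\xi|^a e^{-t|\xi|^2}\lesssim t^{-(a-(d-2))/2}|\xi|^{d-2}$ valid for $a\ge d-2$. Second, the drift estimate $\|L\|_{\mathscr{L}(\Y_a)}\lesssim\omega\,\|\nabla\varphi_0\|_{\PM^{d-1}}$, where the inner kernel $e^{-(s/\tau)|\eta|^2}$ yields, after the time integrations, $\omega\approx\sqrt\tau$ in the regime $0<\tau\le1$ (with a logarithmic correction when $d=2$, from a borderline time integral) and $\omega\approx b^{-2}$ for $\tau\ge1$ (the $b^{-2}$ coming from the constants $C_{c,c'}$ in the product rule as exponents approach the boundary); imposing $\|L\|<1$ produces the stated conditions on $\|\nabla\varphi_0\|_{\PM^{d-1}}$. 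Third, the bilinear estimate $\III{B}_{\Y_a}\lesssim\beta(\tau,a)$: the prefactor $1/\tau$, the slowly decaying kernel $e^{-((s-\sigma)/\tau)|\eta|^2}$, and the two nested time integrals give, after optimization, $\beta\approx b^{-3}$ for $\tau\le1$ and $\beta\approx b^{-3}\tau^{-(1-b)}$ for $\tau\ge1$. Writing $a=d-2b$ with $b\in(0,1]$ (so $a\in[d-2,d)$), the smallness condition $\III{B}\cdot\|e^{t\Delta}u_0\|\lesssim1$ becomes $\|u_0\|_{\PM^{d-2}}\lesssim b^3\tau^{1-b}$, matching~\eqref{small-assu}; taking $b=1$ recovers~\eqref{small-assu0}. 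Part (ii) is then the uniqueness clause of the same lemma in a ball of radius $R\approx\III{(I-L)^{-1}B}^{-1}$, and since $\Y_{d-2}=\mathcal X$ while the product rule places $L(u),B(u,u),e^{t\Delta}u_0$ in $\mathcal X$, the solution lies in $\mathcal X$, giving part (i).

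The main obstacle is the bilinear estimate with sharp $\tau$-dependence, and specifically establishing $\III{B}_{\Y_a}\to0$ as $\tau\to+\infty$ --- exactly the property that fails in the smaller space $\mathcal Y_a$ and that forces the weaker weighted norm of $\Y_a$. The difficulty is to integrate against two differently scaled heat kernels while extracting the joint optimal power of $\tau$ and of the gap parameter $b=(d-a)/2$, so that $\beta(\tau,a)\approx b^{-3}\tau^{-(1-b)}$; for large $\tau$ the inner factor $\tfrac1\tau\int_0^s e^{-((s-\sigma)/\tau)|\eta|^2}\dd\sigma$ ceases to cancel the $1/\tau$ and is precisely the source of the decay. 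A secondary obstacle is the borderline behaviour in dimension $d=2$, where $d-2=0$ sits at the edge of validity of the product rule and the relevant integrals diverge logarithmically, explaining the $|\ln(\tau/\mathrm e)|$ correction. Finally, optimizing $b^3\tau^{1-b}$ over $b\in(0,1]$ for $\tau\gg1$ gives the maximizer $b\approx1/\ln\tau$ and the value $\approx\tau/(\ln\tau)^3$, recovering the headline condition~\eqref{in:phi1}.
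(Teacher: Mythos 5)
Your proposal follows the paper's own route essentially step for step: the same Fourier-side elimination of $\varphi$, the same decomposition $u=e^{t\Delta}u_0+Lu+B(u,u)$, the same quadratic fixed-point lemma in the weighted space $\Y_a$, and the same three ingredients (heat bound $\PM^{d-2}\to\Y_a$, the Riesz convolution identity $|x|^{-\alpha}*|x|^{-\beta}=C(\alpha,\beta,d)|x|^{-(\alpha+\beta)+d}$, and the exponential-kernel time-integral lemma), leading to the same quantitative outcome $\III{B}\approx b^{-3}\tau^{b-1}$, $\|L\|_{\mathscr{L}(\Y_a)}\lesssim b^{-2}\|\nabla\varphi_0\|_{\PM^{d-1}}$ for $\tau\ge1$ (resp. $\lesssim\sqrt\tau\,\|\nabla\varphi_0\|_{\PM^{d-1}}$ for $\tau\le1$), and the same optimization $b\approx1/\ln\tau$ when $\tau\gg1$.

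There is, however, one concrete flaw: the rigid coupling $a=d-2b$, i.e. $b=(d-a)/2$. Taking $b=1$ --- which is what you do to ``recover'' \eqref{small-assu0} --- forces $a=d-2$, and the scheme then breaks in the low dimensions that the theorem covers. For $d=2$ this gives $a=0$: the convolution identity requires both exponents strictly positive, so the bilinear estimate is simply unavailable in $\Y_0$. For $d=3$ this gives $a=1$: the linear estimate carries a factor $1/(a-1)$, and the bilinear estimate invokes the convolution lemma with exponents $\alpha=a$, $\beta=a-1+2b$, whose hypothesis $\alpha+\beta>d$ reads $2a+2b-1>d$, i.e. $3>3$ --- false; equivalently, the constant $C(\alpha,\beta,d)$, which contains $\Gamma\bigl(\tfrac{\alpha+\beta-d}{2}\bigr)$, diverges. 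This is exactly why the paper does \emph{not} tie $a$ to the scaling value $d-2b$ (its footnote allows that choice only for $d\ge4$): it takes $a=d-\tfrac43 b$, so that $a\ge d-\tfrac43>1$ for $d\ge3$ and $a=\tfrac23$ for $d=2$, $b=1$, keeping every exponent strictly inside the admissible window, and then recovers $u\in\mathcal{X}=\Y_{d-2}$ \emph{a posteriori} through separate $\mathcal{X}$-valued estimates for $e^{t\Delta}u_0$, $L$ and $B$ (obtained by a different choice of the exponent in the time-integral lemma). Your large-$\tau$ regime does survive your parametrization for $d\ge3$, since $b\approx1/\ln\tau$ keeps $a$ away from the bad values, and for $d=2$ once $b<\tfrac12$; but the small-$\tau$ case \eqref{small-assu0} in dimensions $2$ and $3$ fails as written and needs the decoupling above. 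Relatedly, the $d=2$ logarithm does not come from the product rule at $a=d-2=0$ (which is never used): it comes from the linear operator, because the $\sqrt\tau$-estimate for $L$ requires $1<a<d-1$, an empty range when $d=2$, and the substitute estimate costs $\sqrt\tau\,|\ln(\tau/{\rm e})|$.
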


\begin{remark}
\label{rem:di}
In the second item, the parameter $a$ depends on $d$, $\tau$ and $b$. For example, 
when $0<\tau\le 1$ and $d\ge2$ one can take $a=d-\frac43$ (or $a=d-2$ when $d\ge4$).
When $\tau \ge1$ and $d\ge3$ one can take $a=d-\frac43 b$ (or $a=d-2b$ when $d\ge4$).
The choice $a=d-\frac43 b$ is valid, in fact, also
for $d=2$ when $0<b\le \frac12$. 
(A modification would be needed when $d=2$,  
$\tau\ge1$  and $\frac12<b\le1$.  But this case is not interesting: if assumption~\eqref{small-assu} is satisfied for such a $b$, then it is satisfied also with $b/2$ instead of~$b$ with a different choice of the constants $\kappa_2$ and
$\tilde \kappa_2$,
and so one can reduce to the previous case).
The radius can be be chosen as follows: $R=4\kappa_d$ when~\eqref{small-assu0} holds
and $R=4\kappa_d\, b^3\tau^{1-b}$ when~\eqref{small-assu} holds.
\end{remark}

In the above theorem, the parameter $b\in(0,1]$ can be tuned as we like: we can choose a fixed $b$ or otherwise a function $b=b(d,\tau,\varphi_0, u_0)$.
When $\tau\gg 1$ an interesting choice is $b=3/\ln\tau$.
Indeed, this is the choice allowing the weakest possible size condition for $u_0$
when $\tau$ is large.
For example, in the model case $\varphi_0=0$ we get the following result:

\begin{corollary}
\label{cor:tl}
Let $d\ge2$, $u_0\in \PM^{d-2}$ and $\varphi_0=0$. If $\tau\ge {\rm e}^3$,
then (PP) possesses a global solution.
under the smallness condition
\begin{equation}
\label{small-assuc}
\|u_0\|_{\PM^{d-2}} < 3^3\kappa_d\,\tau/({\rm e}\ln\tau)^3.
\end{equation}
Such a solution belongs to $\mathcal{X}\cap\Y_{d-4/\ln\tau}$
and is unique in a ball of $\Y_{d-4/\ln\tau}$
centered at the origin, with radius $0<r\lesssim \tau/(\ln\tau)^3$
.
\end{corollary}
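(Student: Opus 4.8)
The plan is to read off Corollary~\ref{cor:tl} from Theorem~\ref{th:PM} by specializing to $\varphi_0=0$ and then tuning the free parameter $b\in(0,1]$ optimally. Since $\varphi_0=0$ forces $\nabla\varphi_0=0$, the second size requirement in~\eqref{small-assu}, namely $\|\nabla\varphi_0\|_{\PM^{d-1}}<\tilde\kappa_d\,b^2$, is satisfied trivially for every admissible $b$. Consequently the entire hypothesis~\eqref{small-assu} collapses to the single scalar condition $\|u_0\|_{\PM^{d-2}}<\kappa_d\,b^3\tau^{1-b}$, and the one degree of freedom left is the choice of $b$, which I am free to make so as to enlarge the admissible size of $u_0$ as much as possible.

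The next step is the optimization. Maximizing $b\mapsto b^3\tau^{1-b}$ on $(0,1]$ is equivalent to maximizing $3\ln b+(1-b)\ln\tau$, whose derivative $3/b-\ln\tau$ vanishes at $b=3/\ln\tau$; concavity makes this the global maximum. The standing assumption $\tau\ge{\rm e}^3$ gives $\ln\tau\ge3$, hence $0<b=3/\ln\tau\le1$, so this $b$ is admissible in Theorem~\ref{th:PM}(i) for $\tau\ge1$. Plugging it in, $b^3=27/(\ln\tau)^3$ and $\tau^{1-b}=\tau\,{\rm e}^{-b\ln\tau}=\tau\,{\rm e}^{-3}$, whence $\kappa_d\,b^3\tau^{1-b}=27\kappa_d\,\tau/({\rm e}^3(\ln\tau)^3)=3^3\kappa_d\,\tau/({\rm e}\ln\tau)^3$. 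Thus the smallness condition~\eqref{small-assuc} is precisely~\eqref{small-assu} evaluated at $b=3/\ln\tau$, and Theorem~\ref{th:PM}(i) directly produces a global mild solution $u\in\mathcal{X}$.

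For the localization and uniqueness I would invoke Theorem~\ref{th:PM}(ii) together with the explicit data of Remark~\ref{rem:di}. For $\tau\ge1$ the admissible exponent there is $a=d-\tfrac43 b$, which with $b=3/\ln\tau$ becomes exactly $a=d-4/\ln\tau$, and the associated radius is $R=4\kappa_d\,b^3\tau^{1-b}=108\kappa_d\,\tau/({\rm e}^3(\ln\tau)^3)\lesssim\tau/(\ln\tau)^3$. This yields the membership $u\in\mathcal{X}\cap\Y_{d-4/\ln\tau}$ and uniqueness in the stated ball of radius $r\lesssim\tau/(\ln\tau)^3$, completing the proof in dimensions $d\ge3$.

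I expect the only point requiring any care to be the case $d=2$. Remark~\ref{rem:di} validates the clean identification $a=d-4/\ln\tau$ for $d=2$ only when $b\le\tfrac12$, i.e. when $\tau\ge{\rm e}^6$; in the intermediate range ${\rm e}^3\le\tau<{\rm e}^6$ one has $\tfrac12<b\le1$ and the localization exponent must be adjusted as prescribed there, though the existence of a global solution under~\eqref{small-assuc} is unaffected. This $d=2$ edge case, along with the attendant relabelling of the constants $\kappa_2,\tilde\kappa_2$, is the only mild obstacle; the substance of the corollary is the single-variable maximization that selects $b=3/\ln\tau$.
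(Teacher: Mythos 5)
Your argument is exactly the paper's: Corollary~\ref{cor:tl} is read off from Theorem~\ref{th:PM} by specializing to $\varphi_0=0$ and taking $b=3/\ln\tau$ (admissible since $\tau\ge{\rm e}^3$ gives $0<b\le1$), and your computations $b^3\tau^{1-b}=3^3\tau/({\rm e}\ln\tau)^3$, $a=d-\tfrac43 b=d-4/\ln\tau$, $R=4\kappa_d\,b^3\tau^{1-b}\lesssim\tau/(\ln\tau)^3$ reproduce the paper's constants verbatim; the single-variable maximization selecting $b=3/\ln\tau$ is also the paper's stated motivation for this choice.

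The one place you go astray is the $d=2$ discussion. Deferring to the modification of Remark~\ref{rem:di} for ${\rm e}^3\le\tau<{\rm e}^6$ (i.e.\ $\tfrac12<b\le1$) would change the exponent $a$, and hence would \emph{not} prove the corollary as stated, since its conclusion names the space $\Y_{d-4/\ln\tau}$ for every $d\ge2$ and every $\tau\ge{\rm e}^3$. In fact no adjustment is needed: the restriction $b\le\tfrac12$ in dimension two serves only to ensure $a=2-\tfrac43 b>1$, a condition required by the estimates \eqref{eq:led1}--\eqref{eq:led2} of the linear operator $L$; since $\varphi_0=0$ forces $L\equiv0$, that constraint is vacuous here, and the bilinear estimate \eqref{eq:bilp} alone suffices (its $d=2$ admissibility conditions \eqref{cond-ab2} hold for $a=2-\tfrac43 b$ and all $0<b\le1$). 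The paper's proof of Theorem~\ref{th:PM} makes this explicit in the clause ``or $0<b\le 1$ when $\varphi_0=0$'' in the case $d=2$, $\tau\ge1$. With that observation in place of your edge-case caveat, your proof covers all $d\ge2$ and $\tau\ge{\rm e}^3$, and the corollary holds verbatim.
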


Corollary~\ref{cor:tl} should be compared with the  results in~\cite{BGK} and \cite{CEM} for $d=2$: therein, the authors assumed $u_0$ to belong to the space of finite Radon measures and in $L^1(\R^2)$, respectively. Then they proved that
solutions to {\rm(PP)} are global provided the size of the initial data in such spaces does not exceed $c_\epsilon\max(1,\tau^{{1/2}-\epsilon})$ for each $\epsilon>0$ and some constant $c_\epsilon>0$.
On one hand, the space $\PM^0(\R^2)$ that we consider is larger; on the other hand our size conditions~\eqref{small-assu}, or~\eqref{small-assuc}, are weaker than those in~\cite{BGK}, \cite{CEM}.

%

When $d=2$ and $u_0(x)=M\delta_0$, or when $d\ge3$ and $u_0=M|x|^{-2}$, then $u_0\in \PM^{d-2}$ and $\|u_0\|_{\PM^{d-2}}\lesssim M$.
Corollary~\ref{th:PM} can be applied to establish the existence of self-similar solutions with $M={\mathcal O}(\tau/(\ln\tau)^3)$ as $\tau\to\infty$.
In particular, an immediate consequence of Corollary~\ref{cor:tl} is the following:

\begin{corollary}\label{sss} 
Let $d\ge2$.
For each $M \in \R$ and the initial data $u_0(x)=M\delta_0$ ($d=2$) or  $u_0(x)=M|x|^{-2}$ ($d\ge3$) and $\varphi_0=0$, there exists $\tau(M)$ such that for $\tau>\tau(M)$ the Cauchy problem has a global-in-time solution which is positive and self-similar. However, this solution may be nonunique for large $\tau$. 
\end{corollary}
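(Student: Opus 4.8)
The plan is to read the global existence off Corollary~\ref{cor:tl} by checking that the prescribed data meet its hypotheses, and then to upgrade the abstract solution to a \emph{self-similar} and \emph{positive} one by exploiting the scale invariance built into the functional setting. First I would record the norm of the data. Since $\widehat{\delta_0}\equiv 1$, in $d=2$ one has $|\xi|^{d-2}|\widehat{u_0}(\xi)|\equiv |M|$, hence $\|u_0\|_{\PM^0}=|M|$; in $d\ge3$ the Riesz-potential identity $\widehat{|x|^{-2}}=c_d\,|\xi|^{-(d-2)}$ gives $|\xi|^{d-2}|\widehat{u_0}(\xi)|\equiv c_d|M|$, so $\|u_0\|_{\PM^{d-2}}=c_d|M|$. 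In either case $u_0\in\PM^{d-2}$ with $\|u_0\|_{\PM^{d-2}}\approx|M|$, while $\varphi_0=0$ trivially meets the condition on $\nabla\varphi_0$. Because $\tau\mapsto\tau/({\rm e}\ln\tau)^3\to+\infty$ as $\tau\to+\infty$, for each fixed $M$ there is a threshold $\tau(M)\ge{\rm e}^3$ such that $c_d|M|<3^3\kappa_d\,\tau/({\rm e}\ln\tau)^3$ whenever $\tau>\tau(M)$. Corollary~\ref{cor:tl} then produces, for every such $\tau$, a global mild solution $u\in\mathcal{X}\cap\Y_{d-4/\ln\tau}$, unique in a ball $B_r\subset\Y_{d-4/\ln\tau}$ centred at the origin.

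The self-similarity will follow from this uniqueness. Writing $a=d-4/\ln\tau$ and $u_\lambda(x,t)=\lambda^2 u(\lambda x,\lambda^2 t)$ for $\lambda>0$, a direct change of variables on the Fourier side shows $\|u_\lambda\|_{\Y_a}=\|u\|_{\Y_a}$, so $u_\lambda\in B_r$; moreover $u_\lambda$ again solves the mild equation~\eqref{Fsol'} with rescaled data $\lambda^2 u_0(\lambda\cdot)$, which equals $u_0$ by the homogeneity of degree $-2$ of $u_0$ (on the Fourier side $\lambda^{2-d}\widehat{u_0}(\xi/\lambda)=\widehat{u_0}(\xi)$ in both cases), and with $\varphi_0=0$ unchanged. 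Uniqueness in $B_r$ then forces $u_\lambda=u$ for every $\lambda>0$; taking $\lambda=t^{-1/2}$ gives $u(x,t)=t^{-1}U(x/\sqrt t)$ with $U=u(\cdot,1)$, which is precisely forward self-similarity.

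The positivity for $M>0$ is where I expect the main obstacle, since the Picard construction passes through the bilinear term $\nabla\cdot(u\nabla\varphi)$, which does not manifestly preserve the sign. The plan is to argue by approximation: take smooth nonnegative data $u_0^{(n)}\to u_0$ with $\|u_0^{(n)}\|_{\PM^{d-2}}\le\|u_0\|_{\PM^{d-2}}$, solve (PP) for each inside the \emph{same} ball $B_r$ (the size condition is still met), and use parabolic regularity together with the classical nonnegativity-preserving property of the Keller--Segel flow for smooth nonnegative data (maximum principle) to obtain $u^{(n)}\ge0$. The Lipschitz dependence of the fixed point on the data, inherent in the contraction estimates underlying Theorem~\ref{th:PM}, then lets me pass to the limit and conclude $u\ge0$.

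Finally, the nonuniqueness for large $\tau$ is not established here; it is merely signalled. The uniqueness in Theorem~\ref{th:PM} holds only inside the small ball $B_r$, whereas additional, larger self-similar solutions carrying the same data $u_0=M\delta_0$ were constructed for $d=2$ in~\cite{BCD,BGK}, and I would simply invoke those results to justify the word ``may'' in the statement.
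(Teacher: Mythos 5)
Your treatment of existence, self-similarity and nonuniqueness coincides with the paper's own (implicit) argument: the paper offers no separate proof of Corollary~\ref{sss}, presenting it as an ``immediate consequence'' of Corollary~\ref{cor:tl}, and the content of that immediacy is exactly what you wrote --- $\|M\delta_0\|_{\PM^{0}}=|M|$, resp.\ $\|M|x|^{-2}\|_{\PM^{d-2}}\approx |M|$, the threshold $\tau/({\rm e}\ln\tau)^3\to\infty$, and then self-similarity from the fact that the data are homogeneous of degree $-2$, the $\Y_{d-4/\ln\tau}$ norm is scaling-invariant, so each rescaling $u_\lambda$ lies in the same uniqueness ball and solves the same fixed-point equation, forcing $u_\lambda=u$. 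The nonuniqueness is likewise only ``signalled'' in the paper, by comparison with the large-mass self-similar solutions of \cite{BCD}; your handling of it matches.

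The step that would fail as written is the positivity argument. You invoke ``Lipschitz dependence of the fixed point on the data'' after approximating $u_0$ by smooth nonnegative $u_0^{(n)}$, but the contraction scheme gives Lipschitz dependence only with respect to the $\PM^{d-2}$ (equivalently $\Y_a$) norm, and the data of the corollary \emph{cannot} be approximated in that norm by smooth, compactly supported, or even $L^1$ functions: for any $u_0^{(n)}\in L^1(\R^2)$, Riemann--Lebesgue gives $\widehat{u_0^{(n)}}(\xi)\to0$ as $|\xi|\to\infty$, hence $\|\delta_0-u_0^{(n)}\|_{\PM^0}\ge \limsup_{|\xi|\to\infty}|1-\widehat{u_0^{(n)}}(\xi)|=1$; similarly in $d\ge3$, boundedness of $\widehat{u_0^{(n)}}$ near $\xi=0$ forces $\|M|x|^{-2}-u_0^{(n)}\|_{\PM^{d-2}}\gtrsim |M|$. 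So only weak-$*$ (distributional) convergence of the data is available, the approximate solutions $u^{(n)}$ are merely bounded in the ball of $\Y_a$, and passing to the limit in the quadratic term $B(u^{(n)},u^{(n)})$ requires a genuine compactness argument (e.g.\ local parabolic regularity yielding strong local convergence) before uniqueness in the ball can identify the limit with $u$ and positivity can be inherited. This is repairable, but it is a missing ingredient rather than a routine continuity statement; note also that positivity can only hold for $M\ge0$ (the statement's ``for each $M\in\R$'' is loose on this point), and that the paper itself asserts positivity without proof, so on this point neither your sketch nor the paper supplies a complete argument.
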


When $d=2$, our uniqueness class should be compared with the nonuniqueness result obtained in \cite{BCD}, for self-similar solutions $u(x,t)=\frac1t U(x/\sqrt t)$ with large mass (depending on $\tau$) and with profiles $U\in {\mathcal C}_0(\R^2)$.

Corollary~\ref{sss} is a higher dimensional counterpart of the earlier result in \cite{BCD} on the self-similar solutions in $d=2$. And this can be interpreted as follows: {\em the dissipation in the Cauchy problem for the system {\rm (PP)} permits to define local- (and even global-in-time) solutions whenever $\tau$ is large enough} --- a striking difference compared to the solvability properties of the Cauchy problem for the system {\rm (PE)}, see e.g. \cite{BKP,BKWa}.  
%

\begin{proof}[Proof of Theorem~\ref{th:PM}]

Taking the Fourier transform in the second equation of {\rm (PP)}, we get
\begin{equation}
\label{eq:wiff'}
\widehat\varphi(\xi,t)={\rm e}^{-\tau^{-1}t|\xi|^2}\widehat\varphi_0(\xi)
+\tau^{-1} \int_0^t {\rm e}^{-\tau^{-1}(t-s)|\xi|^2}\widehat
u(\xi,s)\dd s.
\end{equation}
From the first equation, we get
\begin{equation}\label{F}
\begin{split}
\widehat u(\xi,t)
&={\rm e}^{-t|\xi|^2}\widehat u_0(\xi)
-\int_0^t i\xi {\rm e}^{-(t-s)|\xi|^2}\widehat{(u\nabla\varphi)}(\xi,s)\dd s
\\
&={\rm e}^{-t|\xi|^2}\widehat u_0(\xi)
  +(2\pi)^{-d}\int_0^t\!\!\int_{\R^d}\xi\eta {\rm e}^{-(t-s)|\xi|^2}\widehat{u}(\xi-\eta,s)\widehat\varphi(\eta,s)\dd\eta\dd s.\\
\end{split}
\end{equation} 
This leads us to introduce the linear operator $L$, depending on $\tau>0$, defined by
\begin{equation*}
\widehat{Lu}(\xi,t)=(2\pi)^{-d}\int_0^t \!\!\int_{\R^d} e^{-(t-s)|\xi|^2}\xi\eta\,\widehat u(\xi-\eta,s)
e^{-\tau^{-1}s|\eta|^2}\widehat\varphi_0(\eta)\dd \eta\dd s.
\end{equation*}
Then we introduce the bilinear operator, also dependent on~$\tau>0$,
\begin{equation}
\label{eq:bilf}
\widehat{B(u,v)}(\xi,t)=
(2\pi)^{-d}\int_0^t\!\!\int_0^s\!\!\int_{\R^d} 
\frac{\xi\eta}{\tau} 
{\rm e}^{-(t-s)|\xi|^2}{\rm e}^{-\frac{1}{\tau}(s-\sigma)|\eta|^2}
\widehat u(\xi-\eta,s)\widehat v(\eta,\sigma)\dd\eta\dd \sigma\dd s.
\end{equation}  
In this way, we see that $u$ satisfies the integral equation
\begin{equation}
\label{Fsol'}
\widehat u(\xi,t)=e^{-t|\xi|^2}\widehat u_0(\xi)+\widehat{Lu}(\xi,t)+\widehat{B(u,u)}(\xi,t).
\end{equation}

Eq.~\eqref{Fsol'} can be written as
\begin{equation}
\label{milde}
u=U_0+Lu+B(u,u),
\end{equation}
with $U_0(t)=e^{t\Delta}u_0$.
The standard fixed point lemma, in the formulation of, e.g., \cite[Theorem~3.1]{Rac}), applies to such equations.
This lemma asserts that the equation above has a unique solution~$u$ 
in the open ball 
\[
\{v\in Y\colon \|v\|_Y<r\},
\]
where $Y$ is a suitable Banach space. 
For this conclusion to be valid, three conditions have to be checked.
The first one is  continuity of the bilinear operator $B\colon Y\times Y\to Y$.
The second condition is the continuity of the linear one, $L\colon Y\to Y$, with 
$\|L\|_{\mathscr{L}(Y)}<1$.
The third condition is that $U_0\in Y$ should be of small enough norm:
for example, when
\begin{subequations} 
\begin{equation}
\label{siL}
\|L\|_{\mathscr{L}(Y)}\le \textstyle\frac12,
\end{equation}
it is sufficient to assume that 
\begin{equation}
\label{size}
\|U_0\|_Y<1/(16|\!|\!|B|\!|\!|).
\end{equation}
\end{subequations}
In this case, the existence and the uniqueness of the solution hold in the open ball
of radius $r=1/(4|\!|\!|B|\!|\!|)$.
See \cite{Rac} for more details.

We now establish the relevant estimates in the following lemmas.

\begin{lemma}
\label{lem:heatpm}
Let $d\ge2$, $a\ge d-2$ and $u_0\in \PM^{d-2}$, $U_0(t)=e^{t\Delta}u_0$. Then
$U_0\in\Y_a$ and there exists $c=c(a,d)>0$ independent of $u_0$ such that
\[
\|U_0\|_{\Y_a}\le c\,\|u_0\|_{\PM^{d-2}}.
\]  
\end{lemma}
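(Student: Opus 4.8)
The plan is to work entirely on the Fourier side, where the heat semigroup acts by the explicit multiplier $e^{-t|\xi|^2}$, so that $\widehat{U_0}(\xi,t)=e^{-t|\xi|^2}\widehat{u_0}(\xi)$. First I would insert the pseudomeasure bound $|\widehat{u_0}(\xi)|\le \|u_0\|_{\PM^{d-2}}\,|\xi|^{-(d-2)}$ into the weight defining the $\Y_a$-norm. This reduces the estimation of $t^{1+(a-d)/2}|\xi|^a|\widehat{U_0}(\xi,t)|$ to controlling the scalar quantity $t^{1+(a-d)/2}|\xi|^{a-d+2}e^{-t|\xi|^2}$, uniformly in $t>0$ and $\xi\in\R^d$, multiplied by $\|u_0\|_{\PM^{d-2}}$.

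The key observation is that both the power of $t$ and the power of $|\xi|$ collapse into the single scale-invariant variable $s=t|\xi|^2$. Writing $b=a-d+2\ge0$, one checks that $1+(a-d)/2=b/2$, so the quantity to bound is exactly $(t|\xi|^2)^{b/2}e^{-t|\xi|^2}=s^{b/2}e^{-s}$. This is the step that makes the whole estimate transparent, and it is precisely the scale invariance of $\Y_a$ manifesting itself. The function $g(s)=s^{b/2}e^{-s}$ on $[0,\infty)$ is continuous, vanishes at $s=0$ when $b>0$, and decays to zero as $s\to+\infty$; it therefore attains a finite maximum, and an elementary one-variable optimization gives $\sup_{s\ge0}g(s)=(b/2)^{b/2}e^{-b/2}$, a constant depending only on $a$ and $d$ (equal to $1$ in the limiting case $b=0$, i.e. $a=d-2$).

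Combining the two steps, I would conclude that $t^{1+(a-d)/2}|\xi|^a|\widehat{U_0}(\xi,t)|\le c(a,d)\,\|u_0\|_{\PM^{d-2}}$ for all $t>0$ and a.e. $\xi$, with $c(a,d)=(b/2)^{b/2}e^{-b/2}$. Taking the essential supremum over $t>0$ and $\xi\in\R^d$ then yields $\|U_0\|_{\Y_a}\le c(a,d)\,\|u_0\|_{\PM^{d-2}}$, which is the desired bound; in particular $U_0\in\Y_a$, and the membership in $L^\infty_{\rm loc}(0,\infty;\mathscr{S}'(\R^d))$ follows from the same pointwise bound on $\widehat{U_0}$.

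There is no serious obstacle here: the estimate is elementary once the reduction to $s=t|\xi|^2$ is made. The only point requiring a little care is the bookkeeping of the exponent $1+(a-d)/2$, so as to verify that it matches $b/2$ and the $t$- and $|\xi|$-powers genuinely merge into a single variable; the hypothesis $a\ge d-2$ (equivalently $b\ge0$) is exactly what guarantees that $g$ remains bounded near $s=0$, and hence that the supremum defining the $\Y_a$-norm is finite.
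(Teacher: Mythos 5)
Your proof is correct and is essentially the paper's own argument: both insert the pseudomeasure bound on $|\widehat u_0|$ and reduce the $\Y_a$-weight to the scale-invariant quantity $\rho^{1+(a-d)/2}e^{-\rho}$ with $\rho=t|\xi|^2$, whose supremum $(1+(a-d)/2)^{1+(a-d)/2}e^{-1-(a-d)/2}$ coincides with your $(b/2)^{b/2}e^{-b/2}$ since $b/2=1+(a-d)/2$.
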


\begin{proof}
Indeed, for any $t>0$ and $\xi\in\R^d$,
\[
\begin{split}
t^{1+(a-d)/2}|\xi|^a {\rm e}^{-t|\xi|^2}|\widehat u_0(\xi)|
&\le t^{1+(a-d)/2}|\xi|^{a-d+2}{\rm e}^{-t|\xi|^2}\|u_0\|_{\PM^{d-2}}\\
&\le c(a,d)\|u_0\|_{\PM^{d-2}},
\end{split}
\]
where $c(a,d)=\sup_{\rho>0}\rho^{1+(a-d)/2}{\rm e}^{-\rho}=(1+(a-d)/2)^{1+(a-d)/2}{\rm e}^{-1-(a-d)/2}$.
\end{proof}

The next two lemmas will be useful for the bilinear estimate of $B$ and the
linear one of $L$.
\begin{lemma}
\label{lem:convo}
Let $0<\alpha,\beta<d$ such that $\alpha+\beta>d$.
Then
\[
|x|^{-\alpha}*|x|^{-\beta}=C(\alpha,\beta,d)|x|^{-(\alpha+\beta)+d},
\]
with
\begin{equation}
\label{eq:CC}
C(\alpha,\beta,d)=\pi^{d/2}
\frac{\Gamma(\frac{d-\alpha}{2})\Gamma(\frac{d-\beta}{2})\Gamma(\frac{\alpha+\beta-d}{2})}{\Gamma(\frac{\alpha}{2})\Gamma(\frac{\beta}{2})\Gamma({d-\frac{\alpha+\beta}{2})}}.
\end{equation}
\end{lemma}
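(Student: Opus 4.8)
The plan is to establish the identity by the subordination (Schwinger parameter) method, which turns the convolution of two Riesz kernels into a Gaussian integral and is especially natural in the present heat-semigroup setting. Before computing the constant, observe that the \emph{form} of the answer is forced: since $|x|^{-\alpha}$ and $|x|^{-\beta}$ are both radial and homogeneous of degrees $-\alpha$ and $-\beta$, their convolution is radial and homogeneous of degree $-(\alpha+\beta)+d$, hence proportional to $|x|^{-(\alpha+\beta)+d}$. The assumptions $\alpha,\beta<d$ make each factor locally integrable, while $\alpha+\beta>d$ makes the resulting exponent negative so that the convolution integral converges at infinity; thus $|x|^{-\alpha}*|x|^{-\beta}$ is a well-defined locally integrable function, and only the value of the constant $C(\alpha,\beta,d)$ remains to be identified.

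To compute it, I would write each kernel through the Gamma integral
\[
|y|^{-\alpha}=\frac{1}{\Gamma(\alpha/2)}\int_0^\infty t^{\alpha/2-1}{\rm e}^{-t|y|^2}\dd t,
\]
and likewise for $|x-y|^{-\beta}$, then exchange the order of integration (justified by Tonelli, all integrands being nonnegative) and carry out the inner $y$-integral by completing the square:
\[
\int_{\R^d}{\rm e}^{-t|y|^2-s|x-y|^2}\dd y=\Bigl(\frac{\pi}{t+s}\Bigr)^{d/2}{\rm e}^{-\frac{ts}{t+s}|x|^2}.
\]
This reduces the problem to a two-parameter integral in $(t,s)$ against $t^{\alpha/2-1}s^{\beta/2-1}(t+s)^{-d/2}$. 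I would then pass to the variables $t=\lambda\mu$, $s=\lambda(1-\mu)$ with $\lambda>0$ and $\mu\in(0,1)$ (Jacobian $\lambda$), which decouples the integral: the $\lambda$-integral is a Gamma integral producing $\Gamma(\frac{\alpha+\beta-d}{2})$ together with the expected factor $|x|^{-(\alpha+\beta)+d}$, while the $\mu$-integral is the Euler beta integral $\int_0^1\mu^{\frac{d-\beta}{2}-1}(1-\mu)^{\frac{d-\alpha}{2}-1}\dd\mu=\Gamma(\frac{d-\beta}{2})\Gamma(\frac{d-\alpha}{2})/\Gamma(d-\frac{\alpha+\beta}{2})$. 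Multiplying the prefactor $\pi^{d/2}/[\Gamma(\frac\alpha2)\Gamma(\frac\beta2)]$ by these two factors reproduces exactly the constant~\eqref{eq:CC}.

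There is in fact no serious analytic obstacle here: the only points requiring care are the convergence of the three Gamma/beta factors and the interchange of integrals, and both are controlled precisely by the three hypotheses — $\alpha<d$ and $\beta<d$ guarantee that the beta integral converges at its two endpoints, $\alpha+\beta>d$ guarantees that the $\lambda$-integral converges at $\lambda=0$, and the nonnegativity of all integrands makes every Fubini/Tonelli step automatic. The remaining work is purely the bookkeeping of exponents. As an alternative one could instead invoke the classical Fourier transform of the homogeneous kernel, ${\mathcal F}[|x|^{-\alpha}]=\gamma_d(\alpha)|\xi|^{-(d-\alpha)}$ with $\gamma_d(\alpha)=2^{d-\alpha}\pi^{d/2}\Gamma(\frac{d-\alpha}{2})/\Gamma(\frac\alpha2)$, turn the convolution into a product, and read off $C(\alpha,\beta,d)=\gamma_d(\alpha)\gamma_d(\beta)/\gamma_d(\alpha+\beta-d)$; a short simplification of the powers of $2$ and $\pi$ and of the Gamma factors again yields~\eqref{eq:CC}. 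I would present the Gaussian computation as the main argument, since it is self-contained and does not require quoting the distributional Fourier transform of $|x|^{-\alpha}$.
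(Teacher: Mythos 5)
Your proposal is correct, and it is genuinely different from what the paper does: the paper offers no proof at all for this lemma, simply citing \cite[Lemma 2.1]{BCGK}, whereas you give a complete, self-contained derivation. Your Schwinger-parameter computation checks out in every detail: the Gamma-integral representation of each Riesz kernel, the completed-square Gaussian integral $\bigl(\tfrac{\pi}{t+s}\bigr)^{d/2}{\rm e}^{-\frac{ts}{t+s}|x|^2}$, and the change of variables $t=\lambda\mu$, $s=\lambda(1-\mu)$ all lead, after the exponent bookkeeping, to the factor $\Gamma\bigl(\tfrac{\alpha+\beta-d}{2}\bigr)|x|^{-(\alpha+\beta)+d}$ from the $\lambda$-integral and the Euler beta integral $\Gamma\bigl(\tfrac{d-\beta}{2}\bigr)\Gamma\bigl(\tfrac{d-\alpha}{2}\bigr)/\Gamma\bigl(d-\tfrac{\alpha+\beta}{2}\bigr)$ from the $\mu$-integral, which combine with the prefactor $\pi^{d/2}/[\Gamma(\tfrac{\alpha}{2})\Gamma(\tfrac{\beta}{2})]$ to give exactly \eqref{eq:CC}; moreover, the three hypotheses are used precisely where you say they are (endpoints of the beta integral and the $\lambda\to0$ behaviour), and Tonelli justifies all interchanges since everything is nonnegative. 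Your alternative Fourier route is also correct: with the paper's convention $\widehat f(\xi)=\int f(x){\rm e}^{-i\xi\cdot x}\dd x$ one has $\widehat{|x|^{-\alpha}}=\gamma_d(\alpha)|\xi|^{-(d-\alpha)}$ with $\gamma_d(\alpha)=2^{d-\alpha}\pi^{d/2}\Gamma(\tfrac{d-\alpha}{2})/\Gamma(\tfrac{\alpha}{2})$, and the quotient $\gamma_d(\alpha)\gamma_d(\beta)/\gamma_d(\alpha+\beta-d)$ indeed simplifies to \eqref{eq:CC} because the powers of $2$ cancel exactly. What your approach buys is self-containedness (no appeal to the distributional Fourier transform of homogeneous kernels, nor to an external reference); what the paper's citation buys is brevity, this being a classical identity on the composition of Riesz potentials. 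Either would be acceptable; yours is the more instructive.
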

\begin{proof}
See \cite[Lemma 2.1]{BCGK}.
\end{proof}
By the properties of the Euler Gamma function $\Gamma$  we have, for $0<\alpha,\beta<d$ such that $\alpha+\beta>d$,
\begin{equation}
\label{eq:Ga}
C(\alpha,\beta,d)\lesssim \frac{\alpha\beta\,(2d-\alpha-\beta)}{(d-\alpha)(d-\beta)(\alpha+\beta-d)}.
\end{equation}

The following lemma is a slightly refined version of \cite[Lemma 3.2]{Rac}.
\begin{lemma}
\label{lem:integ}
Let $s>0$, $A>0$, $\delta>0$, $0\le b\le1$ and $\delta_*=\delta\wedge 1$.
Then
\[
\int_0^s {\rm e}^{-(s-\sigma)A}\sigma^{-1+\delta}\dd \sigma\le 4\delta_*^{-1}A^{-b}s^{\delta-b}.
\]
\end{lemma}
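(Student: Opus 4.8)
The plan is to bound the integral $\int_0^s e^{-(s-\sigma)A}\sigma^{-1+\delta}\dd\sigma$ by splitting it at the midpoint $\sigma=s/2$, handling the singularity at $\sigma=0$ and the exponential decay factor separately. First I would write
\[
I:=\int_0^s e^{-(s-\sigma)A}\sigma^{-1+\delta}\dd\sigma
=\int_0^{s/2}+\int_{s/2}^s=:I_1+I_2.
\]
On the lower piece $I_1$, the exponent satisfies $s-\sigma\ge s/2$, so $e^{-(s-\sigma)A}\le e^{-sA/2}$ is essentially harmless; what I must control is the integrable singularity $\sigma^{-1+\delta}$, which gives $\int_0^{s/2}\sigma^{-1+\delta}\dd\sigma=\delta^{-1}(s/2)^\delta$. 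On the upper piece $I_2$, the factor $\sigma^{-1+\delta}$ is comparable to $s^{-1+\delta}$ (up to a constant depending on whether $\delta\gtrless 1$), and the exponential is integrated against the $\sigma$ variable, yielding a factor like $A^{-1}$ after $\int_{s/2}^s e^{-(s-\sigma)A}\dd\sigma\le A^{-1}$.

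The subtle point is producing the factor $A^{-b}s^{-b}$ with a \emph{tunable} exponent $b\in[0,1]$ rather than the natural $A^{-1}$ or $A^{0}$ obtained from the two crude endpoints. The key step is interpolation: the quantity $A^{-1}$ (from $I_2$) and the quantity $1$ (the trivial bound $e^{-(s-\sigma)A}\le1$ used in $I_1$) should be combined so that the resulting power of $A$ is exactly $-b$. Equivalently, I expect to use the elementary inequality $e^{-x}\le C_b\, x^{-b}$ for $x\ge0$ and $0\le b\le1$ (valid with a constant like $C_b=(b/e)^b\le1$), applied to $x=(s-\sigma)A$. This converts the exponential directly into $(s-\sigma)^{-b}A^{-b}$, after which the integral $\int_0^s (s-\sigma)^{-b}\sigma^{-1+\delta}\dd\sigma$ is a Beta-type integral scaling like $s^{\delta-b}$. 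This single manoeuvre produces both the correct power $A^{-b}$ and the correct power $s^{\delta-b}$ simultaneously and avoids the two-piece bookkeeping.

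Carrying this out, after substituting $\sigma=s\rho$ the integral becomes $s^{\delta-b}\int_0^1(1-\rho)^{-b}\rho^{-1+\delta}\dd\rho=s^{\delta-b}B(\delta,1-b)$, where $B$ is the Beta function. The remaining task is to bound $B(\delta,1-b)=\Gamma(\delta)\Gamma(1-b)/\Gamma(1+\delta-b)$ by $4\delta_*^{-1}$ uniformly in $b\in[0,1]$, where $\delta_*=\delta\wedge1$. The main obstacle is precisely this uniform constant: one must check that the Beta function blows up like $\delta^{-1}$ as $\delta\to0$ (the origin singularity), stays bounded as $b\to1$ (where $\Gamma(1-b)\to\infty$ but is compensated by $\Gamma(1+\delta-b)\to\Gamma(\delta)\sim\delta^{-1}$ in the denominator), and never exceeds $4\delta_*^{-1}$. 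I would verify this by treating the regimes $\delta\le1$ and $\delta>1$ separately, using $\Gamma(1-b)\le\Gamma(1+\delta-b)\cdot(\text{const})$ type monotonicity estimates and the bound $\Gamma(\delta)\le\delta^{-1}$ for $\delta\le1$; the combinatorial constant $4$ then absorbs the worst case. Folding the constant $C_b\le1$ into this completes the estimate.
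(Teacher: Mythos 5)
Your plan contains a genuine gap at its key step: the pointwise interpolation $e^{-x}\le C_b\,x^{-b}$, applied under the integral sign, cannot yield the lemma on the whole range $0\le b\le 1$. After that substitution you are left with
\[
A^{-b}\int_0^s (s-\sigma)^{-b}\sigma^{-1+\delta}\dd \sigma
= A^{-b}\,s^{\delta-b}\,\mathrm{B}(\delta,1-b),
\qquad
\mathrm{B}(\delta,1-b)=\frac{\Gamma(\delta)\Gamma(1-b)}{\Gamma(1+\delta-b)},
\]
and this Beta integral is \emph{infinite} at $b=1$ (the integrand $(s-\sigma)^{-1}\sigma^{-1+\delta}$ is not integrable near $\sigma=s$) and blows up like $(1-b)^{-1}$ as $b\nearrow 1$. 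Your claimed compensation --- that the divergence of $\Gamma(1-b)$ is cancelled because $\Gamma(1+\delta-b)\to\Gamma(\delta)\sim\delta^{-1}$ --- is erroneous: for a fixed $\delta>0$, $\Gamma(\delta)$ is a finite constant independent of $b$, so the ratio $\Gamma(1-b)/\Gamma(1+\delta-b)$ still diverges as $b\to1$. Hence no constant of the form $4\delta_*^{-1}$, uniform in $b\in[0,1]$, can come out of this route. This is not a negligible boundary case: the paper uses the lemma exactly at $b=1$ (for instance to derive estimate \eqref{eq:led1}, and in the bilinear estimate for $0<\tau\le1$), so the endpoint must be covered.

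The repair is to interpolate \emph{after} integrating rather than pointwise before --- which is precisely the two-piece argument your first paragraph sketches and then discards. Splitting at $\sigma=s/2$, the lower piece gives $\delta^{-1}(s/2)^{\delta}e^{-sA/2}$ and the upper piece gives $(s/2)^{-1+\delta}\int_{s/2}^{s}e^{-(s-\sigma)A}\dd\sigma\le (s/2)^{\delta}\bigl(1\wedge\frac{2}{As}\bigr)$ (for $0<\delta\le1$; the case $\delta\ge1$ is analogous), so that altogether the integral is bounded by $4\delta_*^{-1}s^{\delta}\bigl(1\wedge\frac{1}{As}\bigr)$. Only then does one invoke the elementary inequality $1\wedge y\le y^{b}$, valid for all $y>0$ and all $0\le b\le1$, including $b=1$. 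The moral: interpolating the minimum of the two finished endpoint bounds is harmless, whereas the pointwise bound $e^{-x}\le C_b x^{-b}$ destroys integrability exactly at the endpoint the paper needs.
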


\begin{proof}
Indeed, for $0<\delta\le1$, splitting the integral at $\sigma=s/2$, we have
\[
\begin{split}
\int_0^s {\rm e}^{-(s-\sigma)A}\sigma^{-1+\delta}\dd \sigma
&\le \delta^{-1}{\rm e}^{-sA/2} (s/2)^{\delta} + (s/2)^{-1+\delta}\int_{s/2}^{s} {\rm e}^{-(s-\sigma)A}\dd \sigma\\
&\le(s/2)^\delta\Bigl(\delta^{-1}{\rm e}^{-sA/2}+(As/2)^{-1}\int_0^{As/2} {\rm e}^{-\sigma'}\dd \sigma'\Bigr)\\
&\le s^\delta\Bigl(\delta^{-1}{\rm e}^{-sA/2}+(1\wedge \textstyle\frac{2}{As})\Bigr)\\
&\le s^\delta\Bigl((\delta^{-1}+1)(1\wedge \textstyle\frac{2}{As})\Bigr)\\
&\le\frac{4 s^\delta}{\delta}(1\wedge \textstyle\frac{1}{As})\\
&\le \frac{4 s^\delta}{\delta}\Bigl(\frac{1}{As}\Bigr)^{b}
\end{split}
\]
for all $0\le b\le 1$.
In a similar way, for $\delta\ge1$ one easily gets that, for any $0\le b\le 1$.
\[
\int_0^s {\rm e}^{-(s-\sigma)A}\sigma^{-1+\delta}\dd \sigma\le 
 4 s^\delta\Bigl(\frac{1}{As}\Bigr)^{b}.
\]
\end{proof}

Let us now establish the relevant bilinear estimates. 
\begin{lemma}
\label{lem:d7}
Let $\tau>0$.
For $d=2$, the bilinear operator $B\colon\Y_a\times \Y_a\to \Y_a$ is continuous for $\frac12<a<2$.
For $d\ge3$, the bilinear operator $B\colon\Y_a\times \Y_a\to \Y_a$ is continuous for $d-2\le a<d$ and $a\not=1$. Moreover, when $a$ belongs to these ranges, the bilinear operator $B\colon\Y_a\times \Y_a\to \mathcal{X}$
 is also continuous.
\end{lemma}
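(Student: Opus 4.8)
The plan is to bound $B$ by performing, inside~\eqref{eq:bilf}, three successive scalar integrations governed by Lemmas~\ref{lem:convo} and~\ref{lem:integ}. Normalizing $\|u\|_{\Y_a}=\|v\|_{\Y_a}=1$, the definition of $\Y_a$ gives the pointwise bounds $|\widehat u(\xi-\eta,s)|\le s^{-\gamma}|\xi-\eta|^{-a}$ and $|\widehat v(\eta,\sigma)|\le\sigma^{-\gamma}|\eta|^{-a}$, where $\gamma:=1+(a-d)/2\in[0,1)$. Inserting these and $|\xi\eta|\le|\xi|\,|\eta|$ into~\eqref{eq:bilf} produces a nonnegative majorant of $|\widehat{B(u,v)}(\xi,t)|$ whose three integrations (in $\sigma$, then $\eta$, then $s$) decouple and can be estimated one at a time.

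First I would integrate in $\sigma$: the inner integral $\int_0^s {\rm e}^{-\frac1\tau(s-\sigma)|\eta|^2}\sigma^{-\gamma}\dd\sigma$ is of the type treated by Lemma~\ref{lem:integ}, with $\delta=1-\gamma$, $A=|\eta|^2/\tau$ and a free exponent $b_1\in[0,1]$; it is controlled by $\tau^{b_1}|\eta|^{-2b_1}s^{1-\gamma-b_1}$, and combined with the prefactor $\tau^{-1}$ this carries all of the $\tau$-dependence, namely a factor $\tau^{b_1-1}$. Next, collecting the powers of $|\eta|$, the $\eta$-integration becomes the convolution $\int_{\R^d}|\xi-\eta|^{-a}|\eta|^{-(a+2b_1-1)}\dd\eta$, which Lemma~\ref{lem:convo} evaluates to $C(a,a+2b_1-1,d)\,|\xi|^{d+1-2a-2b_1}$, the constant being finite and estimated through~\eqref{eq:Ga}. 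Finally, the outer integral in $s$ becomes $\int_0^t{\rm e}^{-(t-s)|\xi|^2}s^{1-2\gamma-b_1}\dd s$, again of the form handled by Lemma~\ref{lem:integ}, now with $A=|\xi|^2$ and a second free exponent $b_2\in[0,1]$.

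Putting the three steps together and using $2-2\gamma=d-a$ yields a bound of the shape $|\widehat{B(u,v)}(\xi,t)|\lesssim \tau^{b_1-1}\,|\xi|^{\,d+2-2a-2b_1-2b_2}\,t^{\,d-a-b_1-b_2}$. To recover the $\Y_a$ weight I need the exponent of $|\xi|$ to equal $-a$ and that of $t$ to equal $-\gamma$; by the scale invariance of $B$ these two conditions coincide and reduce to the single relation $b_1+b_2=\tfrac{d-a}2+1$. For the stronger mapping $B\colon\Y_a\times\Y_a\to\mathcal{X}=L^\infty(0,\infty;\PM^{d-2})$ one instead requires the exponent of $|\xi|$ to be $-(d-2)$ and that of $t$ to be $0$, which similarly reduces to $b_1+b_2=d-a$; the computation is otherwise identical.

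The only real obstacle is to produce admissible exponents $b_1,b_2$ meeting \emph{all} the constraints of the two lemmas at once: $0\le b_1,b_2\le1$; positivity of $\delta=1-\gamma$ and of the time exponent $2-2\gamma-b_1=d-a-b_1$; and the hypotheses $0<a+2b_1-1<d$ together with $a+(a+2b_1-1)>d$ of Lemma~\ref{lem:convo}. The last inequality is the binding one: it forces $b_1>\tfrac{d+1}2-a$, and since $b_1\le1$ this is feasible exactly when $a>\tfrac{d-1}2$. For $d=2$ this reads $a>\tfrac12$; for $d=3$ it reads $a>1$, so that the borderline value $a=1$ (where one would need $b_1>1$, equivalently $\alpha+\beta=d$ in Lemma~\ref{lem:convo}) must be excluded; and for $d\ge4$ one has $\tfrac{d-1}2<d-2$, so the whole interval $[d-2,d)$ is allowed. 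Verifying that these inequalities cut out precisely the stated ranges, and that the complementary exponent $b_2=\tfrac{d-a}2+1-b_1$ (respectively $d-a-b_1$) then also lands in $[0,1]$, completes the proof.
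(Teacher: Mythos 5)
Your proof is correct and follows essentially the same route as the paper: the paper proves Lemma~\ref{lem:d7} via the sharper Lemma~\ref{lem:bilinear}, using exactly your three successive integrations (Lemma~\ref{lem:integ} in $\sigma$ with $A=|\eta|^2/\tau$, Lemma~\ref{lem:convo} in $\eta$, Lemma~\ref{lem:integ} again in $s$ with $A=|\xi|^2$), two free exponents (your $b_1,b_2$ are the paper's $b,\gamma$), the same single compatibility relation $b_1+b_2=1+(d-a)/2$ for $\Y_a$ (respectively $b_1+b_2=d-a$ for $\mathcal{X}$), and the same identification of $2a+2b_1-1>d$ as the constraint forcing $a>(d-1)/2$ and excluding $a=1$ when $d=3$.
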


\begin{remark}
 For $d=2$, A. Raczyński~\cite{Rac} proved a slightly different result, i.e., the continuity of
 $B\colon(\mathcal{X}\cap\Y_a)\times(\mathcal{X}\cap\Y_a)\to(\mathcal{X}\cap\Y_a)$.
 His estimates are uniform with respect to $\tau$.
\end{remark}

In fact, we will establish a more precise version of Lemma~\ref{lem:d7}, in order to take advantage of the fact that the norm of the bilinear operator $B$ in $\Y_a$ does depend on $\tau$, namely 
\begin{lemma}
\label{lem:bilinear}
Let $\tau>0$.
If $a$ and $b$ are such that 
\begin{subequations}
\begin{equation}
\label{cond-ab}
 \begin{cases}
 d-2b\le a<d-b\\
 0< b\le 1, \;a\not=1
 \end{cases}
 \qquad\text{when $d\ge3$,}
 \end{equation}
or
\begin{equation}
\label{cond-ab2}
\begin{cases}
 \frac32-b<a<2-b\\
 2-2b\le a\\
 0<b\le 1
\end{cases}
\qquad\text{when $d=2$}
\end{equation}
\end{subequations}
(when  $a$ satisfies the conditions of Lemma \ref{lem:d7}, then it is always possible to find $b$ satisfying~\eqref{cond-ab}--\eqref{cond-ab2}), then the following bilinear estimates hold 
\begin{subequations}
\begin{equation}
 \label{eq:bilest}
 \|B(u,v)\|_{\Y_a}\le K\,\tau^{b-1}\|u\|_{\Y_a}\|v\|_{\Y_a},
\end{equation}
and
\begin{equation}
 \label{eq:bilest2}
 \|B(u,v)\|_{\mathcal{X}}\le K\,\tau^{b-1}\|u\|_{\Y_a}\|v\|_{\Y_a},
\end{equation}
\end{subequations}
where $K=K(a,b,d)$ is independent of $\tau$, $u$ and $v$.
\end{lemma}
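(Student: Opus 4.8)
The plan is to estimate $|\widehat{B(u,v)}(\xi,t)|$ pointwise by replacing each factor by the bound coming from the $\Y_a$-norm, using $|\xi\cdot\eta|\le|\xi|\,|\eta|$, and then integrating successively in $\sigma$, $\eta$ and $s$. Writing $U=\|u\|_{\Y_a}$, $V=\|v\|_{\Y_a}$ and $\delta=(d-a)/2$, the definition of $\Y_a$ gives $|\widehat u(\xi-\eta,s)|\le U\,s^{-1+\delta}|\xi-\eta|^{-a}$ and $|\widehat v(\eta,\sigma)|\le V\,\sigma^{-1+\delta}|\eta|^{-a}$, and one checks directly from \eqref{cond-ab}--\eqref{cond-ab2} that $0<\delta\le b\le1$ in all cases. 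Inserting these bounds in \eqref{eq:bilf} yields
\[
|\widehat{B(u,v)}(\xi,t)|\le (2\pi)^{-d}\frac{|\xi|}{\tau}\,UV\int_0^t\!\!\int_{\R^d}|\eta|^{1-a}|\xi-\eta|^{-a}\,{\rm e}^{-(t-s)|\xi|^2}\,s^{-1+\delta}\Bigl(\int_0^s {\rm e}^{-\frac1\tau(s-\sigma)|\eta|^2}\sigma^{-1+\delta}\dd\sigma\Bigr)\dd\eta\,\dd s.
\]

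The crucial step is to apply Lemma~\ref{lem:integ} to the inner $\sigma$-integral with $A=|\eta|^2/\tau$ and with its free exponent chosen equal to $b$; since $\delta\wedge1=\delta$, this bounds the inner integral by $4\delta^{-1}\tau^{b}|\eta|^{-2b}s^{\delta-b}$. This is exactly where the favorable $\tau$-dependence is produced: combined with the prefactor $1/\tau$ it generates the factor $\tau^{b-1}$, at the price of an extra decay weight $|\eta|^{-2b}$. Freezing $s$, the $\eta$-integrand becomes $|\eta|^{1-a-2b}|\xi-\eta|^{-a}$, i.e. the convolution $|\cdot|^{-\alpha}*|\cdot|^{-\beta}$ at $\xi$ with $\alpha=a+2b-1$ and $\beta=a$. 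I would then invoke Lemma~\ref{lem:convo}, after verifying that its three hypotheses $0<\alpha,\beta<d$ and $\alpha+\beta>d$ are precisely encoded by \eqref{cond-ab}--\eqref{cond-ab2}; for instance $\alpha+\beta>d$ reads $2a+2b-1>d$, which for $d=2$ is the constraint $a>\tfrac32-b$, while the exclusion $a\ne1$ rules out the borderline $\alpha+\beta=d$ for small $d$. This produces a factor $C(\alpha,\beta,d)\,|\xi|^{d+1-2a-2b}$, finite by \eqref{eq:Ga}.

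It remains to integrate in $s$. The surviving $s$-integral is $\int_0^t {\rm e}^{-(t-s)|\xi|^2}\,s^{-1+(d-a-b)}\dd s$, with positive exponent since $d-a-b>0$, to which Lemma~\ref{lem:integ} applies once more with $A=|\xi|^2$ and a free exponent chosen to match the target norm. For \eqref{eq:bilest} I take this exponent to be $b'=\delta+1-b$, so that the resulting powers combine to $|\xi|^{-a}\,t^{-1+\delta}$, exactly the weight in the $\Y_a$-norm; here $0<b'\le1$ follows precisely from $0<\delta\le b\le1$. For \eqref{eq:bilest2} I instead take the exponent $b''=d-a-b$, which lies in $[0,1]$ again by the hypotheses and kills the $t$-dependence, leaving the power $|\xi|^{2-d}$ and hence the bound in $\mathcal{X}=L^\infty(0,\infty;\PM^{d-2})$ uniformly in $t$. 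Collecting all constants into $K=K(a,b,d)$ — none of which depend on $\tau$ — gives both estimates.

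The main point requiring care is the balancing act in the middle two steps rather than any single hard estimate: extracting $\tau^{b}$ from the $\sigma$-integral forces the extra weight $|\eta|^{-2b}$, which shifts the convolution exponent $\alpha$ and thereby limits how large $a$ and $b$ may be taken. Reconciling the convergence conditions of Lemma~\ref{lem:convo} (namely $\alpha<d$ and $\alpha+\beta>d$) with the positivity of the two time-integral exponents ($\delta>0$ and $d-a-b>0$) and with the constraints $0<b',b''\le1$ is exactly what pins down the admissible ranges \eqref{cond-ab}--\eqref{cond-ab2}; everything else is bookkeeping of the powers of $|\xi|$, $t$ and $\tau$.
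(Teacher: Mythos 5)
Your proof is correct and is essentially the paper's own argument: the same pointwise Fourier bound from the $\Y_a$-norms, the same application of Lemma~\ref{lem:integ} with $A=|\eta|^2/\tau$ and free exponent $b$ to extract $\tau^{b-1}$ at the cost of the weight $|\eta|^{-2b}$, the same use of Lemma~\ref{lem:convo} with $\alpha=a+2b-1$, $\beta=a$, and the same final application of Lemma~\ref{lem:integ} with the free exponent chosen per target norm (your $b'=1+(d-a)/2-b$ and $b''=d-a-b$ are exactly the paper's two choices of $\gamma$). Your verification of the admissibility constraints, including the role of $a\neq 1$ at the borderline $2a+2b-1=d$ when $d=3$, $b=1$, matches the paper's as well.
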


\begin{proof} 
Without loss of generality we can take $u$ and $v$ in $\Y_a$ with $\|u\|_{\Y_a}=\|v\|_{\Y_a}=1$. Then we have the estimate
\begin{equation*}
\begin{split}
 |\widehat B(&u,v)(\xi,t)|\\
 &\le
 (2\pi)^{-d}\int_0^t\!\!\int_0^s\!\!\int_{\R^d} 
\frac{|\xi|}{\tau} 
{\rm e}^{-(t-s)|\xi|^2}{\rm e}^{-\frac{1}{\tau}(s-\sigma)|\eta|^2} s^{-1+(d-a)/2}\sigma^{-1+(d-a)/2}|\xi-\eta|^{-a}|\eta|^{-a+1}
\dd\eta\dd \sigma\dd s.
\end{split}
 \end{equation*}
Our conditions imply
\[
a<d.
\] 
Applying Lemma~\ref{lem:integ} with $A=|\eta|^2/\tau$ and $\delta=(d-a)/2$ in the first inequality, $(d-a)_*=\min(d-a,1)$, and then Lemma~\ref{lem:convo} in the second inequality,
we get
\begin{equation*}
\begin{split}
 |\widehat B(&u,v)(\xi,t)|\\
 &\le
 {\textstyle\frac{8}{(2\pi)^d(d-a)_*}} \,\tau^{b-1}
 \int_0^t\!\!\int_{\R^d} 
|\xi| 
{\rm e}^{-(t-s)|\xi|^2} s^{-1+d-a-b}|\xi-\eta|^{-a}|\eta|^{-a+1-2b}
\dd\eta\dd s\\
&\le 
{\textstyle\frac{8\, C(a,a-1+2b,d) }{(2\pi)^d(d-a)_*}} \,\tau^{b-1}\,  |\xi|^{-2a-2b+2+d}
 \int_0^t {\rm e}^{-(t-s)|\xi|^2} s^{-1+d-a-b}\dd s.
\end{split}
\end{equation*}
Let us now apply Lemma~\ref{lem:integ} with $A=|\xi|^2$ and $\delta=d-a-b$ in the line above.
We obtain, for any $0\le \gamma\le 1$:
\begin{equation}
\label{eq:gab}
 |\widehat B(u,v)(\xi,t)|
  \le 
  K \,\tau^{b-1}\, t^{d-a-b-\gamma}|\xi|^{-2a-2b-2\gamma+d+2},
\end{equation}
where
\begin{equation}
 \label{eq:KK}
 K=\textstyle\frac{32\, C(a,a-1+2b,d)}{(2\pi)^d(d-a)_*(d-a-b)_*}.
\end{equation}
Let us first prove the bilinear estimate~\eqref{eq:bilest}.
In~\eqref{eq:gab}, in order to get $B(u,v)\in \Y_a$, we want
\begin{equation*}
\label{eq:syst}
 \begin{cases}
 -2a-2b-2\gamma+d+2=-a\\
 d-a-b-\gamma=-1-(a-d)/2,
 \end{cases}
 \end{equation*}
 and this system is equivalent to the single equation 
\begin{equation*}
 \gamma=-b+1+(d-a)/2.
\end{equation*}
In the above, the first application of Lemma~\ref{lem:integ} required $a<d$ and $0\le b\le 1$. The application of Lemma~\ref{lem:convo}
required $0<a<d$, $0<a-1+2b<d$ and $2a+2b-1>d$. The second application of Lemma~\ref{lem:integ} required $d-a-b>0$ and $0\le \gamma\le1$.
All these conditions are satisfied when $a$ and $b$ verify the assumptions of Lemma \ref{lem:bilinear}.
This proves~\eqref{eq:bilest}.

\bigskip

Let us now prove the second bilinear estimate~\eqref{eq:bilest2}.
For this, we have to take a different choice for $\gamma$, namely $\gamma=d-a-b$. Indeed, this implies
\begin{equation*}
\label{eq:syst2}
 \begin{cases}
 -2a-2b-2\gamma+d+2=-d+2\\
 d-a-b-\gamma=0,
 \end{cases}
 \end{equation*}
so that $B(u,v)\in\mathcal{X}$ by~\eqref{eq:gab}.

For this choice of $\gamma$ to be admissible, we need to put the new restriction 
$0\le d-a-b\le1$. As before, we need also $0<a<d$, $0<a-1+2b<d$, $2a+2b-1>d$ and $d-a-b>0$ and $0\le b\le1$. All this conditions are satisfied under the assumptions of the lemma.
\end{proof}

Asymptotically with respect to $\tau$, the most interesting choices for the parameter $b$ are:  
\begin{itemize}
\item[i)] $b\searrow 0$ (and $a\nearrow d$) when $\tau >\!\!\!>1$, or otherwise
\item[ii)] $b=1$ when $0<\tau<\!\!\!<1$.
\end{itemize}

Let us study the behaviour of the constant 
\begin{equation}
 K(a,b,d)=\frac{32\, C(a,a-1+2b,d)}{(2\pi)^d(d-a)_*(d-a-b)_*}
\end{equation}
appearing in Lemma \ref{lem:bilinear}.
When $b\searrow 0$ and $a\nearrow d$, recalling formula \eqref{eq:CC}
we see that
\[
C(a,a-1+2b,d)\approx\Gamma\Bigl(\frac{d-a}{2}\Bigr)\approx\frac{1}{d-a},
\]
and thus
\[
 K(a,b,d)\approx \frac{1}{(d-a)^{2}(d-a-b)}.
\]
In each dimension $d\ge2$ and for each $0<b\le 1$, we can always choose in Lemma~\ref{lem:bilinear}, for example,
 $a=d-\frac43b$. Then\footnote{
When $d\ge4$, instead of taking $a=d-\frac43b$, one can also take $a=d-2b$ for any 
$0<b\le1$, and reproduce the same calculations as done here.
This choice is perhaps more natural, at least when $b=1$,
because it allows to construct a unique solution directly in a ball of 
$\mathcal{X}=\Y_{d-2}$,
under the appropriate size conditions on the data.
} we get
\[
K(d-\textstyle\frac43b,b,d)\approx b^{-3},\qquad \text{as $b\searrow 0$}.
\]
But $K(d-\textstyle\frac43b,b,d)$ remains bounded when $0<b\le 1$ and $b$ is away from a neighborhood of $0$.
Therefore, we get from estimate~\eqref{eq:bilest},
\begin{equation}
\label{eq:bilp}
\|B(u,v)\|_{\Y_{d-\frac43b}}\le \frac{1}{16\,\kappa_d}\, b^{-3}\tau^{b-1} \|u\|_{\Y_{d-\frac43b}}\|v\|_{\Y_{d-\frac43b}}
\qquad(0<b\le 1,\quad d\ge2),
\end{equation}
for some constant $\kappa_d$ depending only on the dimension $d$.

On the other hand, the constant $c(a,d)$ defined in Lemma~\ref{lem:heatpm}
satisfies $c(a,d)=\psi(1+(a-d)/2)$, with the function $\psi(\rho)=\rho^\rho {\rm e}^{-\rho}$.
But $\psi(\rho)\le 1$ for any $0< \rho\le 1$, hence $c(a,d)\le 1$ for any $d-2\le a<d$.
In particular, recalling that $U_0(t)={\rm e}^{t\Delta}u_0$,
\begin{equation}
\label{dat}
\|U_0\|_{\Y_{d-\frac43b}} \le \|u_0\|_{\PM^{d-2}} \qquad (0<b\le 1,\quad d\ge2).
\end{equation}

Following the fixed point strategy, we now establish the relevant 
estimates for $\|L\|_{\mathscr{L}(\Y_{d-\frac43b})}$. 
This is the purpose of the next lemma.

\begin{lemma}
\label{lem:d=3}
Let $d\ge2$, $\varphi_0\in\mathscr{S}'(\R^d)$, with $\nabla\varphi_0\in\PM^{d-1}$.
Let $1<a<d$. 
Then $L\colon \Y_a\to \Y_a$ is continuous and
\begin{equation}
\label{eq:led1}
\|L\|_{\mathscr{L}(\Y_a)}\lesssim
\frac{\|\nabla\varphi_0\|_{\PM^{d-1}}}{(a-1)(d-a)^2}.
\end{equation}
Moreover,
if $d\ge3$ and $1<a<d-1$, then we have also
\begin{equation}
\label{eq:led2}
\|L\|_{\mathscr{L}(\Y_a)}\lesssim
\frac{\sqrt\tau\,\|\nabla\varphi_0\|_{\PM^{d-1}}}{(a-1)(d-a-1)}.
\end{equation}
For $d\ge2$ and $1<a<d$, if in addition $a\ge d-2$, then $L\colon\Y_a\to\mathcal{X}$ is continuous and 
\begin{equation}
\label{eq:led3}
\|L\|_{\mathscr{L}(\Y_a,\mathcal{X})}\lesssim
\frac{\|\nabla\varphi_0\|_{\PM^{d-1}}}{(a-1)(d-a)^2}.
\end{equation}
\end{lemma}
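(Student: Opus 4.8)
The plan is to treat all three bounds through a single pointwise estimate on $\widehat{Lu}(\xi,t)$ and to separate the cases only at the very last step, by a suitable choice of exponents. First I would insert into the definition of $\widehat{Lu}$ the three elementary bounds $|\xi\cdot\eta|\le|\xi|\,|\eta|$, then $|\eta\,\widehat\varphi_0(\eta)|=|\widehat{\nabla\varphi_0}(\eta)|\le|\eta|^{-(d-1)}\|\nabla\varphi_0\|_{\PM^{d-1}}$, and finally the defining bound of $\Y_a$, namely $|\widehat u(\xi-\eta,s)|\le\|u\|_{\Y_a}\,s^{-1+(d-a)/2}|\xi-\eta|^{-a}$. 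This reduces the whole lemma to controlling
\[
|\xi|\int_0^t {\rm e}^{-(t-s)|\xi|^2}s^{-1+(d-a)/2}\Big(\int_{\R^d}|\xi-\eta|^{-a}{\rm e}^{-\tau^{-1}s|\eta|^2}|\eta|^{-(d-1)}\dd\eta\Big)\dd s,
\]
i.e.\ an inner convolution in $\eta$ followed by a single time integral, exactly as in the bilinear estimate of Lemma~\ref{lem:bilinear} but with one integration fewer (the datum $\varphi_0$ replaces the $\sigma$-integral of $B$).

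For \eqref{eq:led1} and \eqref{eq:led3} I would simply discard the Gaussian, bounding ${\rm e}^{-\tau^{-1}s|\eta|^2}\le1$. The inner integral is then $|\cdot|^{-a}*|\cdot|^{-(d-1)}$, to which Lemma~\ref{lem:convo} applies provided $a>1$ (so that $a+(d-1)>d$), producing $C(a,d-1,d)\,|\xi|^{1-a}$; by \eqref{eq:Ga} this constant is $\lesssim 1/[(a-1)(d-a)]$, which accounts for the $(a-1)$ and one power of $(d-a)$. I then apply Lemma~\ref{lem:integ} to the remaining time integral with $A=|\xi|^2$ and $\delta=(d-a)/2$, leaving the free parameter (playing the role of $b$ there), call it $\gamma\in[0,1]$. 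Choosing $\gamma=1$ matches the $\Y_a$ weight and yields \eqref{eq:led1}, the extra factor $\delta_*^{-1}\lesssim 1/(d-a)$ supplying the second power of $(d-a)$; choosing instead $\gamma=(d-a)/2$ cancels the $t$-weight and lands the output in $\mathcal{X}$, giving \eqref{eq:led3}. The admissibility $\gamma=(d-a)/2\le 1$ is precisely the hypothesis $a\ge d-2$, while $\delta>0$ reproduces $a<d$.

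The substantial case is \eqref{eq:led2}, where the factor $\sqrt\tau$ must be extracted from the Gaussian. The clean route I would take is to rescale $\eta=(\tau/s)^{1/2}\zeta$ in the inner integral, which turns it into $(\tau/s)^{(1-a)/2}\,\Phi\big(\xi\,(\tau/s)^{-1/2}\big)$ with the $\tau$-free profile $\Phi(w)=\int_{\R^d}|w-\zeta|^{-a}{\rm e}^{-|\zeta|^2}|\zeta|^{-(d-1)}\dd\zeta$. The key sub-estimate is then $\Phi(w)\lesssim (a-1)^{-1}|w|^{-a}$, which I would prove by splitting the $\zeta$-integral into $\{|\zeta|\le|w|/2\}$, $\{|\zeta-w|\le|w|/2\}$ and the complement: the first two regions are harmless (using $a<d$ and the integrability of ${\rm e}^{-|\zeta|^2}|\zeta|^{-(d-1)}$), while the tail region produces a radial integral $\int_{\sim|w|}^\infty r^{-a}\dd r\approx(a-1)^{-1}|w|^{1-a}$, the source of the pole $1/(a-1)$ and the reason $a>1$ is needed. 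Feeding the resulting bound $\int_{\R^d}|\xi-\eta|^{-a}{\rm e}^{-\tau^{-1}s|\eta|^2}|\eta|^{-(d-1)}\dd\eta\lesssim (a-1)^{-1}(\tau/s)^{1/2}|\xi|^{-a}$ back in, the surviving time integral has $s$-exponent $-1+(d-a-1)/2$; Lemma~\ref{lem:integ} with $A=|\xi|^2$, $\delta=(d-a-1)/2$ and the forced choice $\gamma=\tfrac12$ then gives the $\Y_a$-scaling with constant $\lesssim 1/[(a-1)(d-a-1)]$ and the prefactor $\tau^{1/2}$. Here $\delta>0$ is exactly $a<d-1$, and $d\ge3$ is needed only to keep the window $1<a<d-1$ nonempty.

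The main obstacle, and the step I would be most careful about, is precisely this extraction of the \emph{full} $\sqrt\tau$. The tempting shortcut --- dominating ${\rm e}^{-\tau^{-1}s|\eta|^2}|\eta|^{2\mu}\lesssim(\tau/s)^{\mu}$ and then applying Lemma~\ref{lem:convo} with the more singular kernel $|\eta|^{-(d-1)-2\mu}$ --- only reaches $\tau^{\mu}$ for $\mu<\tfrac12$, with a constant blowing up like $1/(1-2\mu)$ as $\mu\to\tfrac12$, because the convolution exponent $\beta=d-1+2\mu$ then hits the forbidden endpoint $\beta=d$ of Lemma~\ref{lem:convo}. For $\tau\le1$ this is strictly worse than $\tau^{1/2}$, so one genuinely cannot route \eqref{eq:led2} through Lemma~\ref{lem:convo} and must estimate the $\eta$-integral directly by the scaling/region-splitting argument above. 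A secondary bookkeeping point is to verify, in each case, that the constraints coming from the applications of Lemma~\ref{lem:integ} (that is, $\delta>0$ and $0\le\gamma\le1$) are mutually compatible and reproduce exactly the stated ranges of $a$.
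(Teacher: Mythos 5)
Your proposal is correct, and its skeleton — pointwise Fourier bounds, reduction to an $\eta$-convolution followed by a single time integral, and repeated use of Lemma~\ref{lem:integ} with three different exponent choices — is the same as the paper's; the difference lies in how the $\eta$-integral is estimated. The paper bounds the Gaussian by ${\rm e}^{-\tau^{-1}s|\eta|^2}\le 1\wedge\frac{\tau}{s|\eta|^2}$, splits the resulting integral at $|\eta|=\sqrt{\tau/s}$ (and each piece into the same three regions you use), and derives the single unified estimate
\[
I(\xi,\tau/s)\lesssim \frac{|\xi|^{-a}}{(d-a)(a-1)}\Bigl(\sqrt{\tfrac{\tau}{s}}\wedge|\xi|\Bigr),
\]
from which \eqref{eq:led1}, \eqref{eq:led2}, \eqref{eq:led3} all follow by picking one branch of the minimum and then taking $b=1$, $b=\tfrac12$, $b=(d-a)/2$ in Lemma~\ref{lem:integ} — exactly your three choices of $\gamma$. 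You instead handle \eqref{eq:led1} and \eqref{eq:led3} by discarding the Gaussian and invoking the exact convolution Lemma~\ref{lem:convo} together with \eqref{eq:Ga}, which is shorter and yields the same constants $\frac{1}{(a-1)(d-a)^2}$; and you handle \eqref{eq:led2} by rescaling $\eta=(\tau/s)^{1/2}\zeta$ and proving the $\tau$-free profile bound $\Phi(w)\lesssim(a-1)^{-1}|w|^{-a}$, which repackages the paper's region splitting in a way that isolates the $\tau$-dependence cleanly. One bookkeeping point: your ``harmless'' region $\{|\zeta-w|\le|w|/2\}$ actually carries a factor $(d-a)^{-1}$, which is bounded by $1$ precisely because \eqref{eq:led2} is only claimed for $a<d-1$; this is worth stating, though it costs nothing. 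What the paper's version buys is a single estimate with the minimum structure serving all three bounds at once; what yours buys is modularity and a cleaner derivation of two of the three bounds. Finally, your closing observation — that pushing the Gaussian through Lemma~\ref{lem:convo} can only reach $\tau^{\mu}$ with $\mu<\tfrac12$ and a constant blowing up at the endpoint — is exactly the mechanism behind the paper's ``alternative approach'' \eqref{eq:led4}, which is needed when $d=2$ and which loses the factor $|\ln\tau|$ for precisely this reason.
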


\begin{proof}

Let $u\in\Y_a$ such that $\|u\|_{\Y_a}=1$.
From the definition of $L$ we see that
\begin{equation*}
\begin{split}
|\widehat{Lu}(\xi,t)|
&\le
\|\nabla\varphi_0\|_{\PM^{d-1}}\int_0^t\!\!\int e^{-(t-s)|\xi|^2}|\xi|\,|\eta|^{-d+1}
|\xi-\eta|^{-a}e^{-\tau^{-1}s|\eta|^2} s^{-1+(d-a)/2}\dd\eta\dd s\\
&\le
\|\nabla\varphi_0\|_{\PM^{d-1}}\,|\xi|\int_0^t\!\!\int e^{-(t-s)|\xi|^2}|\xi-\eta|^{-a}|\eta|^{-d+1}\Bigl(1\wedge\frac{\tau}{s|\eta|^2}\Bigr) s^{-1+(d-a)/2}\dd s\dd \eta\\
&\le
\|\nabla\varphi_0\|_{\PM^{d-1}}\, |\xi|\int_0^t e^{-(t-s)|\xi|^2} I(\xi,\tau/s) s^{-1+(d-a)/2}\dd s.
\end{split}
\end{equation*}
Here,
\[
I(\xi,\tau/s)\equiv I_1+I_2,
\]
with 
\[
I_1=\int_{|\eta|\le \sqrt{\tau/s}}
  |\xi-\eta|^{-a}|\eta|^{-d+1}\dd \eta,
  \qquad\text{and}
  \qquad
I_2=\frac{\tau}{s}\int_{|\eta|\ge \sqrt{\tau/s}}
  |\xi-\eta|^{-a}|\eta|^{-d-1}\dd \eta.
  \]
The integral $I_1$ can be estimated splitting it into three terms: 
\[
I_1=
\Bigl(\int_{|\eta|\le \sqrt{\tau/s},\; |\eta|\le |\xi|/2}
+\int_{|\eta|\le \sqrt{\tau/s},\; |\xi-\eta|\le |\xi|/2}
+\int_{|\eta|\le \sqrt{\tau/s},\; |\eta|\ge |\xi|/2,\; |\xi-\eta|\ge |\xi|/2}
\Bigr)
|\xi-\eta|^{-a}|\eta|^{-d+1}\dd \eta.
\]
We find in this way,
\[
I_1
\lesssim |\xi|^{-a}\Bigl(|\xi|\wedge\sqrt{\frac{\tau}{s}}\Bigr)+
|\xi|^{-d+1}\int_{|\eta|\le \sqrt{\tau/s},\;|\xi-\eta|\le |\xi|/2} 
|\xi-\eta|^{-a}\dd \eta
+ \int_{|\xi|/2\le |\eta|\le \sqrt{\tau/s}}|\eta|^{-a-d+1}\dd\eta.
\]
Now, if $2\sqrt{\tau/s}\le|\xi|$, then the two last terms on the right-hand side are equal to zero, and we get
$I_1\lesssim |\xi|^{-a}\sqrt{\tau/s}$. 
Otherwise, if $|\xi|\le 2\sqrt{\tau/s}$,
then $I_1\lesssim (1+\frac{1}{d-a}+\frac{1}{a-1})|\xi|^{-a+1}$.
Hence, in any case,
\[
I_1\lesssim
\frac{|\xi|^{-a}}{(d-a)(a-1)}
\Bigl(\sqrt{\frac{\tau}{s}}\wedge|\xi|\Bigr).
\]
To estimate $I_2$, we split the region $\{\eta\colon|\eta|\ge\sqrt{\tau/s}\}$
into three regions just as before, and we easily get:
\[
\begin{split}
I_2
&\lesssim
\frac{\tau}{s}\biggl(
 |\xi|^{-a}\int_{\sqrt{\tau/s}\le|\eta|\le|\xi|/2} |\eta|^{-d-1}\dd\eta
 +|\xi|^{-d-1}\int_{|\eta|\ge \sqrt{\tau/s},\;|\xi-\eta|\le |\xi|/2} 
|\xi-\eta|^{-a}\dd \eta\\
&\qquad\qquad\qquad
	+ \int_{|\eta|\ge\max(|\xi|/2,\sqrt{\tau/s})}|\eta|^{-a-d-1}\dd\eta
\biggr).
\end{split}
\]
If $\frac32|\xi|\le\sqrt{\tau/s}$, then the first two terms in the right-hand side vanish and therefore
$I_2\lesssim (\sqrt{\tau/s})^{-a+1}$.
Otherwise, if $\sqrt{\tau/s}\le\frac32|\xi|$, then we get
$I_2\lesssim |\xi|^{-a}\sqrt{\tau/s} +(\frac1{d-a}+1)\frac{\tau}{s}|\xi|^{-a-1})
\lesssim \frac1{d-a}|\xi|^{-a}\sqrt{\tau/s}$.

Combining the two estimates for $I_1$ and $I_2$ we deduce
\[
I(\xi,\tau/s)=I_1+I_2
\lesssim \frac{|\xi|^{-a}}{(d-a)(a-1)}\Bigl(\sqrt{\frac\tau{s}}\wedge |\xi|\Bigr).
\]
Therefore, going back to the estimate for $\widehat L$, we deduce
\begin{equation}
\label{eq:poL}
|\widehat{Lu}(\xi,t)|
\lesssim
\frac{\|\nabla\varphi_0\|_{\PM^{d-1}}|\xi|^{1-a}}{(d-a)(a-1)}\int_0^t e^{-(t-s)|\xi|^2}\Bigl(\sqrt{\frac\tau{s}}\wedge |\xi|\Bigr)
s^{-1+(d-a)/2}\dd s.
\end{equation}
Applying Lemma~\ref{lem:integ}, with $A=|\xi|^2$ and $b=1$ we deduce
\[
|\widehat{Lu}(\xi,t)|
\lesssim
\|\nabla\varphi_0\|_{\PM^{d-1}}\frac{|\xi|^{-a}t^{-1+(d-a)/2}}{(d-a)^2(a-1)}
\qquad (1<a<d).
\]
Estimate~\eqref{eq:led1} follows.
If, instead, we apply Lemma~\ref{lem:integ}, 
with $A=|\xi|^2$ and $b=\frac12$ we deduce
\[
|\widehat{Lu}(\xi,t)|
\lesssim
\|\nabla\varphi_0\|_{\PM^{d-1}}\frac{\sqrt{\tau}\,|\xi|^{-a}t^{-1+(d-a)/2}}{(d-a-1)(a-1)}
\qquad(1<a<d-1,\quad d\ge3).
\]
This implies estimate~\eqref{eq:led2}.

In order to establish estimate~\eqref{eq:led3} we go back to inequality
~\eqref{eq:poL} and apply Lemma~\ref{lem:integ} with $A=|\xi|^2$ and $b=(d-a)/2$.
This implies, for $d-2\le a<d$,
\[
|\widehat{Lu}(\xi,t)|
\lesssim
\|\nabla\varphi_0\|_{\PM^{d-1}}\frac{|\xi|^{2-d}}{(d-a)^2(a-1)}.
\]
The claimed estimate follows.
\end{proof}

There is an alternative approach for the estimate of $L$.
Instead of considering the integral in the $\eta$-variable before that in the~$s$-variable, as we did at the beginning of the previous proof, we could inverse the order of two estimates.
It turns out that this alternative approach gives a slightly worse result
when $d\ge3$, but it has the advantage that it goes through also for $d=2$.
Namely let us prove the following:
\begin{equation}
\label{eq:led4}
\begin{split}
\|L\|_{\mathscr{L}(\Y_{d-\frac43})}
&\lesssim \sqrt \tau|\ln\textstyle\frac{\tau}{{\rm e}}| \,\|\nabla\varphi_0\|_{\PM^{d-1}}\\
\|L\|_{\mathscr{L}(\Y_{d-\frac43},\mathcal{X})}
&\lesssim \sqrt \tau|\ln\textstyle\frac{\tau}{{\rm e}}| \,\|\nabla\varphi_0\|_{\PM^{d-1}}
\end{split}
\qquad
(d\ge2,\quad 0<\tau\le 1).
\end{equation}
(For $d\ge3$, estimates \eqref{eq:led2}-\eqref{eq:led3} are available and give a better result).

Here is how we can proceed.
Going back to the expression of $\widehat L(\xi,t)$, we have, for any $\beta\ge0$,
and a suitable constant $C_\beta>0$,
\begin{equation}
\label{altest}
\begin{split}
|\widehat{Lu}(\xi,t)|
&\lesssim
\|\nabla \varphi_0\|_{\PM^{d-1}}\int_0^t\!\!\int e^{-(t-s)|\xi|^2}|\xi|\,|\eta|^{-d+1}
|\xi-\eta|^{-a}e^{-\tau^{-1}s|\eta|^2} s^{-1+(d-a)/2}\dd\eta\dd s\\
&\lesssim C_\beta\,\tau^\beta
\|\nabla \varphi_0\|_{\PM^{d-1}}
\int |\xi|\,|\eta|^{-d+1-2\beta}
|\xi-\eta|^{-a} \int_0^t e^{-(t-s)|\xi|^2} s^{-1-\beta+(d-a)/2}\dd s\dd\eta.\\
\end{split}
\end{equation}

Now, when $0\le \beta<(d-a)/2$, we can drop the constant $C_\beta$ and Lemma~\ref{lem:integ} applies (with $A=|\xi|^2$ and $b=1-\beta$) yielding
\begin{equation}
\label{eq:fin}
\begin{split}
|\widehat{Lu}(\xi,t)|
&\lesssim
\frac{\tau^\beta}{d-a-2\beta}\, \|\nabla \varphi_0\|_{\PM^{d-1}}
 |\xi|^{1-2(1-\beta)} t^{-1+(d-a)/2} \int |\xi-\eta|^{-a}|\eta|^{-d+1-2\beta}\dd\eta.
\end{split}
\end{equation}
Therefore, for 
$0<a<d$, $0\le \beta<\frac12$ and $1< a+2\beta<d$, we get by Lemma~\ref{lem:convo},
\begin{equation}
\label{eq:esL0}
\begin{split}
\|L\|_{\mathscr{L}(\Y_a)}
&\lesssim \frac{\tau^\beta\,C(a,d-1+2\beta,d)}{d-a-2\beta} \|\nabla \varphi_0\|_{\PM^{d-1}}\\
&\lesssim \frac{\tau^\beta \,a\,(d-1+2\beta)(d-a+1-2\beta)}{(d-a)(1-2\beta)(a+2\beta-1)(d-a-2\beta)} \|\nabla \varphi_0\|_{\PM^{d-1}},
\end{split}
\end{equation}
where the function $C$ is given by~\eqref{eq:CC},
the latter inequality being a consequence of the bound~\eqref{eq:Ga}.

We can take $a=d-\frac43$ and $\frac14\le \beta< \frac12$ (so, in particular, three of the four factors in the denominator are bounded away from zero).
Then we get
\begin{equation}
\label{eq:esL2}
\begin{split}
\|L\|_{\mathscr{L}(\Y_{d-\frac43})}
&\lesssim \|\nabla \varphi_0\|_{\PM^{d-1}}\frac{\tau^\beta}{1-2\beta}\qquad(\textstyle\text{for $\frac14\le \beta<\frac12$}).
\end{split}
\end{equation}
Minimizing the coefficient on the right-hand side with respect to $\beta$
leads to choosing $\beta=\frac12+\frac1{\ln\tau}$.
When $0<\tau\le{\rm e}^{-4}$, with this choice we do have 
$\beta\in[\frac14,\frac12)$ and so
\begin{equation}
\label{dor}
\|L\|_{\mathscr{L}(\Y_{d-\frac43})}\lesssim 
\|\nabla \varphi_0\|_{\PM^{d-1}}
\sqrt\tau|\ln \tau|  , \qquad
\qquad(0<\tau\le {\rm e}^{-4}).
\end{equation}
When ${\rm e}^{-4}\le \tau\le 1$ the choice of $\beta$ is not important ($\beta=\frac14$ will do).
Then, the first inequality of~\eqref{eq:led4} follows.

To establish the second estimate in~\eqref{eq:led4} we restrict ourselves, as before, to $0<a<d$, $0\le \beta<\frac12$ and  $1<a+2\beta<d$.
If, in addition, we assume $d-a-2\beta\le 2$, then we can apply, in~\eqref{altest},
Lemma~\ref{lem:integ} in a different way than before (now with $A=|\xi|^2$ and $b=\frac{d-a}{2}-\beta$), and we obtain
the following modification of estimate~\eqref{eq:fin}:
\[
\begin{split}
|\widehat{Lu}(\xi,t)|
&\lesssim 
\frac{\tau^\beta}{d-a-2\beta}\,\|\nabla \varphi_0\|_{\PM^{d-1}}|\xi|^{1-(d-a)+2\beta} \int|\xi-\eta|^{-a}|\eta|^{-d+1-2\beta}\dd \eta\\
&\lesssim
\frac{\tau^\beta C(a,d-1+2\beta,d)}{d-a-2\beta} |\xi|^{-d+2}\|\nabla \varphi_0\|_{\PM^{d-1}}.
\end{split}
\]
Therefore, under the previous conditions on the parameters
we obtain
\begin{equation}
\label{eq:Liii}
\|L\|_{\mathscr{L}(\Y_a,\mathcal{X})}
\lesssim \frac{\tau^\beta C(a,d-1+2\beta,d)}{d-a-2\beta}\|\nabla \varphi_0\|_{\PM^{d-1}}.
\end{equation}
We can proceed as before, choosing $a=d-\frac43$,
and $\beta=\frac12+\frac{1}{\ln\tau}\in [\frac14,\frac12)$
when $0<\tau\le {\rm e}^{-4}$, or $\beta=\frac14$ when 
${\rm e}^{-4}\le \tau\le 1$.
We readily get the second inequality in~\eqref{eq:led4}.

\medskip

We are now in a position to complete the proof of Theorem~\ref{th:PM}.

\paragraph{The case $d\ge3$.}
For $0<\tau\le 1$ we use the estimates~\eqref{eq:bilp}--\eqref{dat}
with $b=1$. We also apply estimate~\eqref{eq:led2}
with $a=d-\frac43$.
Thus,
\[
\|U_0\|_{\Y_{d-\frac43}}\le \|u_0\|_{\PM^{d-2}},
\qquad
\|B(u,v)\|_{\Y_{d-\frac43}}\lesssim 
\frac{1}{16\,\kappa_d}\|u\|_{\Y_{d-\frac43}}\|v\|_{\Y_{d-\frac43}}
\]
and, for some constant $\tilde\kappa_d>0$,
\[
\|L\|_{\mathscr{L}(\Y_{d-\frac43})}
\le \frac{\sqrt\tau\,\|\nabla\varphi_0\|_{\Y_{d-\frac43}}}{2\,\tilde\kappa_d}.
\]
We deduce that the fixed point lemma~\cite[Theorem~3.1]{Rac} applies in the ball of the space 
$\Y_{d-\frac43}$ with center $0$ and radius $4\kappa_d$, under the smallness conditions~\eqref{small-assu0}.

When $\tau\ge1$, we make use of estimates~\eqref{eq:bilp}-\eqref{dat} 
with a general $0<b\le1$, and of
estimate~\eqref{eq:led1} with $a=d-\frac43 b$.
The latter can be written as (after taking, if necessary, a larger value for
the constant $\tilde\kappa_d$)
\[
\|L\|_{\mathscr{L}(\Y_{d-\frac43b})}
\le \frac{\|\nabla\varphi_0\|_{\Y_{d-\frac43b}}}{2\,\tilde\kappa_d\, b^2}.
\]
Now, for any $0<b\le 1$, the fixed point lemma applies in the ball of the space 
$\Y_{d-\frac43b}$, with center $0$ and radius $4\kappa_d b^3\tau^{1-b}$, under the smallness condition~\eqref{small-assu}.

The solution constructed above, in the space $\Y_{d-\frac43}$ ($0<\tau\le1$),
or in the space $\Y_{d-\frac43b}$ ($\tau\ge1$) belongs in any case also to $\mathcal{X}$.
This is a consequence of the 
the fact that
\[
u=U_0+Lu+B(u,u),
\] 
and that each one of the terms in the right-hand side belongs to $\mathcal{X}$.
Indeed, to see this we just have to apply the last assertion of Lemma~\ref{lem:d7}, estimate~\eqref{eq:led3}, and the elementary fact that the heat semigroup 
$u_0\mapsto U_0$ is bounded from
$\PM^{d-2}$ to~$\mathcal{X}$ (see Lemma~\ref{lem:heatpm}, case $a=d=2$). 
This establishes the theorem in the case $d\ge3$.

\paragraph{The case $d=2$.}

When $0<\tau\le1$, we take as before $a=d-\frac43=\frac23$. 
Estimates~\eqref{eq:bilp}-\eqref{dat} are still available, but estimates~\eqref{eq:led1}-\eqref{eq:led2} are not, because $a<1$.
But the slightly more stringent smallness condition on $\varphi_0$ that we assumed
in the case $d=2$ allow us to apply the rougher estimate~\eqref{eq:led4}.
We can conclude as before that there exists a solution $u\in \Y_{2/3}\cap\mathcal{X}$,
which is unique in the ball of $\Y_{2/3}$ centered at the origin and with radius $4\kappa_2$.

When $\tau\ge1$ and $0<b\le\frac12$ (or $0<b\le 1$ when $\varphi_0=0$),
the proof that we presented in the case $d\ge3$ goes through also when $d=2$ without any change,
because $d-\frac43 b\ge \frac43>1$ (the latter condition is needed in the application of~\eqref{eq:led1} and \eqref{eq:led2}).
The case $\tau\ge1$ and $\frac12<b\le 1$ is not interesting, as discussed in Remark~\ref{rem:di}.
\end{proof}

\section{Study of {\rm(PP)} in Besov spaces}

The functional setting considered in the previous section is somewhat restrictive. In this section, we will study the problem {\rm (PP)} in 
Besov-type spaces,  generalizing the approach of \cite{BGK} to $d \ge 3$. 
For sake of simplicity we limit ourselves to the case $\varphi_0=0$.
Then the integral formulation~\eqref{milde} of {\rm (PP)} simplifies to
\begin{equation}
\label{milde}
u=U_0+B(u,u),
\qquad
\text{with}\quad U_0(t)=e^{t\Delta}u_0.
\end{equation}

Let
\begin{equation} \label{E_p}
{\mathcal E}_p:=\left\{ u \in L^{\infty}(0,\infty;L^p(\R^d)),\ \III{u}_p:=\esssup_{t>0} t^{1-d/(2p)} \Vert u \Vert_p <\infty\ \right\}.
\end{equation}
Note that this space is invariant with respect to the rescaling
\[
u_{\lambda}(x,t):=\lambda^2 u(\lambda x,\lambda^2 t).
\]
We recall the classical inequalities $L^p-L^q$ for the heat semigroup (see e.g. \cite[Sec. 15.1]{TayIII}), under the condition $1\le p \le q\le\infty$:
\begin{equation} \label{heat}
\Vert {\rm e}^{t\Delta} f \Vert_q \le C(d,p,q) t^{-d(1/p-1/q)/2} \Vert f \Vert_p,
\qquad
 \Vert \nabla {\rm e}^{t\Delta} f \Vert_q \le C(d,p,q) t^{-1/2-d(1/p-1/q)/2} \Vert f \Vert_p.
\end{equation}
Now we study the (respectively, linear and bilinear) operators $\mathbb{L}$ and $B$ given by:
\begin{align} \label{L}
&\mathbb{L}z(t):=\tau^{-1} \int_{0}^{t}{\nabla {\rm e}^{\tau^{-1} (t-s) \Delta} z(s) \ds},\ 
\\ \label{B}
&B(u,z)(t):=-\int_{0}^{t}{\nabla {\rm e}^{(t-s) \Delta} \cdot(u(s) \mathbb{L}z(s)) \ds}.
\end{align}

Below, all constants are implicitly assumed to depend on $d$. They are also implicitly assumed to depend on $p$ and $q$ in the lemmas; this dependence is only made explicit in the theorems.

\begin{lemma} \label{BesL1}
If $0 \le 1/q \le 1/p < 1/q+1/d$ and $0<1/p$, then
$$
\III{\mathbb{L}z(t)}_q \le C \tau^{-1/2+d/2(1/p-1/q)} \III{z}_p.
$$
\end{lemma}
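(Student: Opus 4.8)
The plan is to estimate $\mathbb{L}z(t)$ pointwise in $t$ by moving the $L^q$ norm inside the time integral in the definition~\eqref{L} of $\mathbb{L}$ and invoking the gradient version of the $L^p$--$L^q$ smoothing estimate~\eqref{heat}. Writing $\sigma=\tau^{-1}(t-s)$ for the effective diffusion time, Minkowski's inequality gives
\[
\|\mathbb{L}z(t)\|_q\le \tau^{-1}\int_0^t \|\nabla {\rm e}^{\tau^{-1}(t-s)\Delta}z(s)\|_q\ds
\le C\,\tau^{-1}\int_0^t \bigl(\tau^{-1}(t-s)\bigr)^{-\frac12-\frac d2(\frac1p-\frac1q)}\|z(s)\|_p\ds .
\]
Here the hypothesis $1/q\le 1/p$ (i.e. $p\le q$) is exactly what makes~\eqref{heat} applicable. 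Extracting the factor $\tau^{\gamma}$ from $\sigma^{-\gamma}$, with $\gamma=\tfrac12+\tfrac d2(\tfrac1p-\tfrac1q)$, and combining it with the prefactor $\tau^{-1}$ produces $\tau^{\gamma-1}=\tau^{-\frac12+\frac d2(\frac1p-\frac1q)}$, which is already the announced power of $\tau$; it then remains only to control the time integral.

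Next I would insert the defining bound $\|z(s)\|_p\le \III{z}_p\,s^{-1+d/(2p)}$ coming from~\eqref{E_p}, reducing the matter to the Euler-type integral
\[
\int_0^t (t-s)^{-\gamma}\,s^{-1+\frac d{2p}}\ds = C(p,q,d)\,t^{\,-\frac12+\frac d{2q}},
\qquad \gamma=\tfrac12+\tfrac d2\Bigl(\tfrac1p-\tfrac1q\Bigr),
\]
where $C(p,q,d)$ is a finite Beta-function value. The two remaining hypotheses enter precisely through the convergence of this integral: integrability at the endpoint $s=t$ requires $\gamma<1$, that is $1/p-1/q<1/d$, which is the assumption $1/p<1/q+1/d$; integrability at $s=0$ requires $d/(2p)>0$, i.e. the assumption $0<1/p$. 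The exponent of $t$ is obtained from $-\gamma+\frac d{2p}=-\frac12-\frac d2(\frac1p-\frac1q)+\frac d{2p}=-\frac12+\frac d{2q}$, so the integral carries exactly the weight $t^{-1/2+d/(2q)}$.

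Combining the two steps yields the pointwise bound
\[
\|\mathbb{L}z(t)\|_q\le C\,\tau^{-\frac12+\frac d2(\frac1p-\frac1q)}\,t^{\,-\frac12+\frac d{2q}}\,\III{z}_p ,
\]
and multiplying by the scale-invariant time weight $t^{1/2-d/(2q)}$ proper to the gradient quantity $\mathbb{L}z=\nabla\varphi$ in the norm $\III{\,\cdot\,}_q$ cancels the power of $t$, giving the claim uniformly in $t>0$. (This weight is the natural one: $\nabla\varphi$ is homogeneous of degree $1$ under the rescaling, whereas $u$ is of degree $2$, which shifts the time exponent by $\tfrac12$ relative to that of $\mathcal{E}_p$.) I expect the only genuinely delicate point to be the endpoint integrability at $s=t$, where the singularity $(t-s)^{-\gamma}$ of the gradient heat kernel competes with the time integration and is controlled exactly on the threshold by $1/p<1/q+1/d$; everything else is the routine bookkeeping of powers of $\tau$ and $t$, which is in fact forced by the scaling invariance of $\mathcal{E}_p$ and of $\mathbb{L}z$.
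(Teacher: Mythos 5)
Your proof is correct and follows essentially the same route as the paper's: Minkowski's inequality, the gradient $L^p$--$L^q$ heat estimate~\eqref{heat}, insertion of the weight $s^{-1+d/(2p)}$ from~\eqref{E_p}, and evaluation of the resulting Beta integral, with the hypotheses entering exactly where you say they do. Your explicit remark that the conclusion is the pointwise bound $\|\mathbb{L}z(t)\|_q\lesssim \tau^{-1/2+d/2(1/p-1/q)}t^{-1/2+d/(2q)}\III{z}_p$ (i.e.\ that $\III{\cdot}_q$ carries the shifted weight $t^{1/2-d/(2q)}$ appropriate to a gradient quantity) correctly matches how the lemma is used later in the paper.
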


\begin{proof} For $t>0$,
\begin{align*}
& \Vert \mathbb{L}z(t) \Vert_q \le C \tau^{-1} \int_{0}^{t}{ \Big(\tau^{-1}(t-s) \Big)^{-1/2-d/2(1/p-1/q)} \Vert z(s) \Vert_p \ds}
\\
& \le C \tau^{-1/2+d/2(1/p-1/q)} \int_{0}^{t}{ (t-s)^{-1/2-d/2(1/p-1/q)} s^{-1+d/2p} \III{z}_p \ds} 
\\
& \le C \tau^{-1/2+d/2(1/p-1/q)} t^{-1/2+d/2q} \III{z(t)}_p.
\end{align*}
\end{proof}

\begin{lemma} \label{BesL2}
If $1/d < 1/p+1/q \le 1$, $0 \le 1/q < 1/d$ and the conditions of Lemma~\ref{BesL1} hold, then we have
$$
\III{B(u,z)}_p \le C \tau^{-1/2+d/2(1/p-1/q)} \III{u}_p \III{z}_p.
$$
\end{lemma}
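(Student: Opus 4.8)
The plan is to follow the classical Kato-type scheme: bound $B(u,z)(t)$ in $L^p$ pointwise in time by combining the smoothing estimates~\eqref{heat} for the heat semigroup with Hölder's inequality and the decay estimate for $\mathbb{L}z$ furnished by Lemma~\ref{BesL1}, and then to recognize the resulting time integral as a Beta function whose convergence at the two endpoints is governed \emph{exactly} by the two new hypotheses $\frac1q<\frac1d$ and $\frac1p+\frac1q>\frac1d$.

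First I would take the $L^p$ norm in~\eqref{B} and move it inside the integral, obtaining $\|B(u,z)(t)\|_p\le\int_0^t\|\nabla{\rm e}^{(t-s)\Delta}(u(s)\mathbb{L}z(s))\|_p\ds$. To the integrand I apply the gradient estimate in~\eqref{heat} from the exponent $r$ to the exponent $p$, where $\frac1r=\frac1p+\frac1q$; this is admissible because $r\le p$ (since $\frac1q\ge0$) and because the upper bound $\frac1p+\frac1q\le1$ guarantees $r\ge1$. The heat estimate then produces the kernel factor $(t-s)^{-1/2-\frac d2(1/r-1/p)}=(t-s)^{-1/2-d/(2q)}$. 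Next, Hölder's inequality with the pair $(p,q)$, adapted to $\frac1r=\frac1p+\frac1q$, splits the product as $\|u(s)\mathbb{L}z(s)\|_r\le\|u(s)\|_p\,\|\mathbb{L}z(s)\|_q$.

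I then insert the time weights. From the definition of $\III{\cdot}_p$ one has $\|u(s)\|_p\le s^{-1+d/(2p)}\III{u}_p$, while Lemma~\ref{BesL1} (in the pointwise-in-time form established in its proof) gives $\|\mathbb{L}z(s)\|_q\le C\,\tau^{-1/2+\frac d2(1/p-1/q)}\,s^{-1/2+d/(2q)}\III{z}_p$; the exponent $-\frac12+\frac{d}{2q}$ reflects the fact that $\mathbb{L}z$ scales like a gradient (dimension one), not like $u$ (dimension two). Collecting the powers of $s$ yields $s^{-3/2+\frac d2(1/p+1/q)}$, so that, after pulling out the $\tau$- and norm-factors, the estimate reduces to the Beta integral $\int_0^t(t-s)^{-1/2-d/(2q)}\,s^{-3/2+\frac d2(1/p+1/q)}\ds$.

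The only genuine point to check is the convergence of this integral, and here the hypotheses fall into place precisely: integrability near $s=t$ (the gradient-kernel singularity) requires $-\frac12-\frac{d}{2q}>-1$, i.e. $\frac1q<\frac1d$, and integrability near $s=0$ (the initial-data scaling singularity) requires $-\frac32+\frac d2(1/p+1/q)>-1$, i.e. $\frac1p+\frac1q>\frac1d$. Under these two conditions the integral equals a finite Beta value times $t^{-1+d/(2p)}$, whence $\|B(u,z)(t)\|_p\le C\,\tau^{-1/2+\frac d2(1/p-1/q)}\III{u}_p\III{z}_p\,t^{-(1-d/(2p))}$; multiplying by $t^{1-d/(2p)}$ and taking the essential supremum over $t>0$ gives the claimed bound. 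I expect the main (and only mild) obstacle to be purely the bookkeeping of exponents: confirming that $\mathbb{L}z$ carries the gradient scaling $s^{-1/2+d/(2q)}$ and that the two endpoint integrability constraints coincide exactly with the stated hypotheses rather than with strictly stronger ones.
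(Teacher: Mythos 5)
Your proof is correct and follows essentially the same route as the paper's: the same choice of $r$ with $1/r=1/p+1/q$, the gradient heat estimate from $L^r$ to $L^p$, H\"older's inequality, the pointwise-in-time form of Lemma~\ref{BesL1}, and the resulting Beta integral yielding the weight $t^{-1+d/(2p)}$. Your explicit verification that the two endpoint integrability conditions coincide exactly with the hypotheses $1/q<1/d$ and $1/p+1/q>1/d$ is a welcome addition that the paper leaves implicit.
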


\begin{proof}
Consider $r$ such that $1/r=1/p+1/q$. Then we have, for $t>0$, using Lemma~\ref{BesL1}, 
\begin{align*}
& \Vert B(u,z)(t) \Vert_p \le C \int_{0}^{t}{(t-s)^{-1/2-d/2(1/r-1/p)} \Vert u(s) \mathbb{L}z(s) \Vert_r \ds}
\\
& \le C \int_{0}^{t}{(t-s)^{-1/2-d/2(1/r-1/p)} \Vert u(s) \Vert_p \Vert \mathbb{L}z(s) \Vert_q \ds}
\\
& \le C \Big( \int_{0}^{t}{(t-s)^{-1/2-d/2q} s^{-1+d/2p} s^{-1/2+d/2q}  \ds} \Big) \tau^{-1/2+d/2(1/p-1/q)} \III{u}_p \III{z}_p
\\
& \le  C t^{-1+d/2p} \tau^{-1/2+d/2(1/p-1/q)} \III{u}_p \III{z}_p.
\end{align*}
\end{proof}

Now let us check compatibility of the exponents in the two lemmas above. The assumptions above imply that:
$$
|1/p-1/d|<1/q \le \min(1/p,1-1/p);\ 1/q<1/d;\ 0<1/p;\ d>1.
$$
This requires choosing $p$ so that $1/d<2/p<\min(1+1/d,4/d)$.
\\
In other words, in order to find a suitable $q$ to use the lemmas above, a necessary and sufficient assumption is:
$$
d \ge 2;\ \max(d/2,\ 2d/(d+1))<p<2d.
$$ 

\begin{lemma} \label{BesL3}
Let $d \ge 2$, $p>d/2$. Then $u_0$ belongs to the Besov space 
$\dot{B}_{p,\infty}^{-(2-d/p)}(\R^d)$, if and only if ${\rm e}^{t\Delta}u_0 \in {\mathcal E}_p$. Moreover, there exists $C_p>0$ such that
\[
 C_p^{-1}\|u_0\|_{\dot{B}_{p,\infty}^{-(2-d/p)}} 
 \le 
 \III{{\rm e}^{t\Delta} u_0}_p \le C_p \Vert u_0 \Vert_{\dot{B}_{p,\infty}^{-(2-d/p)}}.
\]
\end{lemma}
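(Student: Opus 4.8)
The statement is the classical thermic (heat-semigroup) characterization of the homogeneous Besov space $\dot B^{s}_{p,\infty}$ with $s:=-(2-d/p)$, the negativity of the smoothness index being guaranteed precisely by the hypothesis $p>d/2$ (since $s<0\iff 2-d/p>0\iff p>d/2$). The plan is to prove the two-sided bound via a Littlewood--Paley decomposition, writing $\|u_0\|_{\dot B^{s}_{p,\infty}}=\sup_{j\in\mathbb Z}2^{js}\|P_j u_0\|_p$, where $P_j$ denotes the homogeneous dyadic frequency projection onto $|\xi|\approx 2^j$, and recalling that the exponent in the definition~\eqref{E_p} satisfies $t^{1-d/(2p)}=t^{-s/2}$, so that $\III{{\rm e}^{t\Delta}u_0}_p=\sup_{t>0}t^{-s/2}\|{\rm e}^{t\Delta}u_0\|_p$.

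For the upper bound I would decompose ${\rm e}^{t\Delta}u_0=\sum_{j}{\rm e}^{t\Delta}P_j u_0$ and use the standard smoothing estimate for frequency-localized data, $\|{\rm e}^{t\Delta}P_ju_0\|_p\le C\,{\rm e}^{-c\,t\,2^{2j}}\|P_j u_0\|_p$, which follows by writing ${\rm e}^{t\Delta}P_j$ as convolution against an $L^1$-normalized rescaled kernel. Inserting the Besov bound $\|P_j u_0\|_p\le 2^{-js}\|u_0\|_{\dot B^s_{p,\infty}}$ and multiplying by $t^{-s/2}$ reduces matters to the uniform-in-$t$ convergence of $\sum_j (t^{1/2}2^j)^{-s}{\rm e}^{-c(t^{1/2}2^j)^2}$; after the substitution $\rho=t^{1/2}2^j$ this is a geometric-progression sum of $\rho\mapsto\rho^{-s}{\rm e}^{-c\rho^2}$, which is bounded independently of the offset (hence of $t$) since $-s>0$ forces decay both as $\rho\to0$ and as $\rho\to\infty$. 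This is exactly the step where $p>d/2$ is used.

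For the lower bound the idea is to invert the heat flow on a single dyadic block. Fixing $j$ and choosing the scale-invariant time $t_j:=2^{-2j}$, I would write $P_j u_0=\bigl(P_j {\rm e}^{-t_j\Delta}\bigr)\,{\rm e}^{t_j\Delta}u_0$ and observe that $P_j {\rm e}^{-t_j\Delta}$ is a Fourier multiplier whose symbol, after rescaling $\xi\mapsto 2^{-j}\xi$, is a fixed smooth function supported in a fixed annulus, independent of $j$ because $t_j2^{2j}=1$. Its kernel is therefore an $L^1$-normalized dyadic rescaling of one fixed Schwartz function, and Young's inequality gives $\|P_j {\rm e}^{-t_j\Delta}\|_{\mathscr{L}(L^p)}\le C$ uniformly in $j$. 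Consequently $2^{js}\|P_j u_0\|_p\le C\,2^{js}\|{\rm e}^{t_j\Delta}u_0\|_p=C\,t_j^{-s/2}\|{\rm e}^{t_j\Delta}u_0\|_p\le C\,\III{{\rm e}^{t\Delta}u_0}_p$, and taking the supremum over $j$ closes the estimate.

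The main obstacle is the lower bound, and specifically the uniform-in-$j$ operator-norm bound for $P_j {\rm e}^{-t_j\Delta}$ on $L^p$: the choice $t_j=2^{-2j}$ is precisely what renders the rescaled symbol independent of $j$, so that a single Young/Bernstein-type estimate suffices for all $j$ at once. The upper bound is routine once the geometric decay of $\sum_j\rho^{-s}{\rm e}^{-c\rho^2}$ is exploited, and the whole statement may alternatively be quoted from the standard references on thermic characterizations of homogeneous Besov spaces.
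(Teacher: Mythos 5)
Your proof is correct, but note that the paper does not actually prove this lemma: its entire proof is the citation ``See \cite[Theorem 2.34]{BCD-Book}'', i.e.\ it quotes the standard thermic (heat-semigroup) characterization of homogeneous Besov spaces of negative index --- exactly the fallback you mention in your last sentence. What you wrote out is, in substance, the textbook proof of that cited theorem: for the upper bound, the frequency-localized smoothing estimate $\|{\rm e}^{t\Delta}P_j u_0\|_p\le C{\rm e}^{-ct2^{2j}}\|P_j u_0\|_p$ together with the observation that $\sum_j(\sqrt t\,2^j)^{-s}{\rm e}^{-c(\sqrt t\,2^j)^2}$ is bounded uniformly in $t$ precisely because $s=-(2-d/p)<0$ (indeed the only place where $p>d/2$ enters); for the lower bound, composition with the multiplier $P_j{\rm e}^{-t_j\Delta}$ at the matched time $t_j=2^{-2j}$, whose rescaled symbol $\varphi(\zeta){\rm e}^{|\zeta|^2}$ is one fixed annulus-supported smooth function, hence uniformly bounded on $L^p$ by Young's inequality. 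Both halves are sound, and the scaling bookkeeping ($t^{1-d/(2p)}=t^{-s/2}$, $2^{js}=t_j^{-s/2}$) is right. The one point you pass over silently, and which the citation to \cite{BCD-Book} handles automatically, is the usual technicality of homogeneous spaces: $\sup_j 2^{js}\|P_j u_0\|_p$ is only a seminorm on $\mathscr{S}'(\R^d)$ (it vanishes on polynomials), so for the ``if and only if'' to be a genuine equivalence one must work modulo polynomials, or restrict to distributions $u_0$ for which $\sum_j P_j u_0$ converges to $u_0$ in $\mathscr{S}'$ (the class $\mathscr{S}'_h$ of \cite{BCD-Book}); your lower bound yields finiteness of the seminorm but not, by itself, this convergence. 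With that caveat made explicit, your argument is a complete, self-contained replacement for the paper's citation: the citation buys brevity and this bookkeeping, while your proof buys transparency about where the hypothesis $p>d/2$ is actually used.
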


\begin{proof}
See \cite[Theorem 2.34]{BCD-Book}.
\end{proof}

We deduce, using  the fixed point lemma in its standard form \cite[Lemma 13.2]{Lem-Book1},
the following theorem.

\begin{theorem} \label{BesT1}
Let $d \ge 2$, $\max(d/2,\ 2d/(d+1))<p<2d$, $u_0 \in \dot{B}_{p,\infty}^{-(2-d/p)}$ and $\varphi_0=0$.
Let $q$ such that
$$
|1/p-1/d|<1/q \le \min(1/p,1-1/p),\qquad  1/q<1/d.
$$
Then there exist constants $C_{p,q},\kappa_{p,q}>0$, independent of $\tau$ and $u_0$, such that if
$$
\Vert u_0 \Vert_{\dot{B}_{p,\infty}^{-(2-d/p)}} < C_{p,q} \tau^{1/2-d/2(1/p-1/q)},
$$ 
then {\rm (PP)} has a mild solution $u \in {\mathcal E}_p$, such that $\III{u}_p \le \kappa_{p,q} \Vert u_0 \Vert_{\dot{B}_{p,\infty}^{-(2-d/p)}}$. 
Moreover, if $u,\tilde{u}$ are two mild solutions of {\rm (PP)} satisfying 
$$
\III{u}_p \le R,\quad \III{\tilde{u}}_p \le R,\quad R<\kappa_{p,q} C_{p,q}\, \tau^{1/2-d/2(1/p-1/q)},
$$
then $u \equiv \tilde{u}$. 
\end{theorem}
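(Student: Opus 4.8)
The plan is to apply the abstract Picard fixed point lemma \cite[Lemma 13.2]{Lem-Book1} to the mild formulation $u = U_0 + B(u,u)$ in the Banach space $Y=\mathcal{E}_p$ equipped with the norm $\III{\cdot}_p$. Recall that this lemma guarantees, for a continuous bilinear map $B\colon Y\times Y\to Y$ of norm $\eta:=\III{B}$, that whenever $\III{U_0}_p<1/(4\eta)$ the equation admits a solution with $\III{u}_p\le 2\III{U_0}_p$, unique in the ball $\{\III{v}_p<1/(2\eta)\}$. Thus everything reduces to controlling the two quantities $\III{U_0}_p$ and $\eta$, together with verifying that the bilinear estimate is available for the chosen exponents.

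First I would fix $q$ in the admissible range prescribed in the statement, namely $|1/p-1/d|<1/q\le\min(1/p,1-1/p)$ and $1/q<1/d$. As observed in the compatibility discussion preceding the theorem, this range is nonempty precisely under the hypothesis $\max(d/2,\,2d/(d+1))<p<2d$, and it is exactly the set of exponents for which the hypotheses of Lemmas~\ref{BesL1} and~\ref{BesL2} hold simultaneously. For the linear part, Lemma~\ref{BesL3} yields $U_0=e^{t\Delta}u_0\in\mathcal{E}_p$ with $\III{U_0}_p\le C_p\|u_0\|_{\dot{B}_{p,\infty}^{-(2-d/p)}}$. For the bilinear part, Lemma~\ref{BesL2} gives the continuity of $B$ on $\mathcal{E}_p$ together with the crucial $\tau$-dependent bound $\eta=\III{B}\le C_B\,\tau^{-1/2+d/2(1/p-1/q)}$, where $C_B=C_B(p,q,d)$. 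Combining these, the existence condition $\III{U_0}_p<1/(4\eta)$ is implied by $C_p\|u_0\|_{\dot{B}_{p,\infty}^{-(2-d/p)}}<\tfrac14 C_B^{-1}\tau^{1/2-d/2(1/p-1/q)}$, i.e. by the stated smallness condition with $C_{p,q}:=(4C_pC_B)^{-1}$. The resulting solution obeys $\III{u}_p\le 2\III{U_0}_p\le 2C_p\|u_0\|_{\dot{B}_{p,\infty}^{-(2-d/p)}}$, so the norm bound holds with $\kappa_{p,q}:=2C_p$, and uniqueness follows from the lemma's uniqueness ball: since $1/(2\eta)\ge\tfrac12 C_B^{-1}\tau^{1/2-d/2(1/p-1/q)}=\kappa_{p,q}C_{p,q}\,\tau^{1/2-d/2(1/p-1/q)}$, any two solutions bounded by $R<\kappa_{p,q}C_{p,q}\tau^{1/2-d/2(1/p-1/q)}$ both lie in this ball and must coincide.

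Since all the genuine analytic work is already carried out in Lemmas~\ref{BesL1}--\ref{BesL3}, the theorem is essentially an exercise in assembling these estimates, and I do not expect a substantive obstacle. The only point requiring care is the bookkeeping of the constants: one must check that the uniqueness radius $1/(2\eta)$ dictated by the abstract lemma matches exactly the threshold $\kappa_{p,q}C_{p,q}\,\tau^{1/2-d/2(1/p-1/q)}$ announced in the statement. This matching is what fixes the otherwise-arbitrary normalization of $\kappa_{p,q}$ and $C_{p,q}$, so I would pin these constants down last, after the two estimates have determined $C_p$ and $C_B$.
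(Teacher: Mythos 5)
Your proposal is correct and takes essentially the same route as the paper, whose entire proof consists of combining Lemmas~\ref{BesL1}--\ref{BesL3} with the standard fixed point lemma \cite[Lemma 13.2]{Lem-Book1} in the space ${\mathcal E}_p$. Your explicit bookkeeping of the constants, $C_{p,q}=(4C_pC_B)^{-1}$ and $\kappa_{p,q}=2C_p$, with the check that $\kappa_{p,q}C_{p,q}\,\tau^{1/2-d/2(1/p-1/q)}$ sits inside the uniqueness ball of radius $1/(2\eta)$, correctly fills in what the paper leaves implicit.
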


For instance, applying Theorem \ref{BesT1} above to $p=q \in (d,2d)$ for $d \ge 2$, we obtain that there exists a constant $C_p>0$ such that for 
\begin{equation}
\Vert u_0 \Vert_{\dot{B}_{p,\infty}^{-(2-d/p)}} < C_p \sqrt{\tau},
\end{equation}
 {\rm (PP)} has a global solution $u \in {\mathcal E}_p$. 

Notice that, for $d\ge3$, Chandrasekhar data $u_0=A/|\cdot|^2$  
belong to $\dot{B}_{p,\infty}^{-(2-d/p)}$, for $p>d/2$. (This is an obvious consequence of any of the
two injections in Remark~\ref{rem:inj} below. Actually, only of one of them for $d=3$).
Thus, for $d \ge 3$ and $u_0(x)=A\sqrt{\tau}/|x|^2$, there exists a self-similar solution to {\rm (PP)} starting from $u_0$, provided $0 \le A < A_d$ for some $A_d$ depending only on $d$. 
Hence one can view Corollary~\ref{sss} both as a consequence of Theorem~\ref{th:PM} or of Theorem~\ref{BesT1}.

\begin{remark}
Theorem~\ref{BesT1} does not encompass the case $\tau=0$. Modifications are required.
The case $\tau=0$ was treated in \cite{I}.
\end{remark} 
\begin{remark} 
\label{rem:inj}
We have the continuous injection
\[
\PM^{d-2}\hookrightarrow \dot{B}_{p,\infty}^{-(2-d/p)},
\qquad \text{for $d\ge2$ and all $p\in(d/2,\infty]$ such that $p\ge2$}.
\]
Therefore, the above existence theorem is more general, in terms of regularity
of the initial data, than Theorem~\ref{th:PM}. On the other hand, for slightly more regular data, Theorem~\ref{th:PM} allows to take initial condition of much larger size for $u_0$ when $\tau\gg1$. The two theorems are thus complementary.
The above injection relies on the application of the Hausdorff--Young inequality.
Indeed, if $p\in [2,\infty]$, $p>d/2$ and $p'$ is the conjugate exponent of $p$, then we have, for some constant $c_p>0$ and all $\tau>0$, using the change of variables $\bar{\xi}=\sqrt{t} \xi$, 
\[
\begin{split}
\|e^{t\Delta}u_0\|_p 
&\le c_p\Bigl(\int e^{-tp'|\xi|^2}|\widehat u_0(\xi)|^{p'}\,\dd\xi\Bigr)^{1/p'}\\
&\le c_p\, t^{-1+d/(2p)}\|u_0\|_{\PM^{d-2}}\Bigl(\int e^{-p'|\bar{\xi}|^2}|\bar{\xi}|^{-(d-2)p'}\dd \bar{\xi} \Bigr)^{1/p'}.
\end{split}
\]
The injection now follows from Lemma~\ref{BesL3}.

Another relevant injection is
\[
L^{d/2,\infty}(\R^d) \hookrightarrow \dot{B}_{p,\infty}^{-(2-d/p)}(\R^d),
\qquad\text{for $d\ge3$ and all $p \in (d/2,\infty]$},
\]
where $L^{p,\infty}$ denotes the Lorentz space 
(see \cite[Chapter 2]{Lem-Book1}).
Indeed, if $f \in L^{d/2,\infty}(\R^d)$, then there exists $C_p>0$ such that $\Vert {\rm e}^{t\Delta} f \Vert_p \le C_p t^{-1+d/2p} \Vert f \Vert_{d/2,\infty}$.
\end{remark}

Now let us discuss the continuity of the mild solution with respect to time. In what follows, we assume that $u_0 \in M^{d/2} \cap \dot{B}_{p,\infty}^{-(2-d/p)}$, and $2d/3<p \le d$.
Notice that $M^{d/2} \not\hookrightarrow \dot{B}_{p,\infty}^{-(2-d/p)}$. 
A suitable example showing this is the derivative $\partial_{x_1}\psi$ with some $\psi\ge 0$ of compact support and not very smooth. 
 On the other hand, in the physically relevant case $u_0 \ge 0$, the condition $u_0 \in M^{d/2}$ is redundant because 
$
\bigl\{ u_0 \in \dot{B}_{d,\infty}^{-1},\ u_0 \ge 0 \bigr\} \subset M^{d/2}
$, 
see \cite[p. 1197]{Lem}. 

\begin{lemma} \label{BesL4}
If $d \ge 2$ and $u \in {\mathcal E}_p$, we have:
$$
\Vert B(u,u)(t) \Vert_{d/2} \le C \tau^{-3/2+d/p} \Vert u \Vert_p^2.
$$
\end{lemma}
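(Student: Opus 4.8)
The plan is to mirror the proof of Lemma~\ref{BesL2}, but now targeting the scaling-critical Lebesgue exponent $d/2$ at the running time $t$, instead of an $\mathcal{E}_p$-bound. Starting from the definition \eqref{B} of $B$ and the gradient heat estimate \eqref{heat} in the endpoint case $L^{d/2}\to L^{d/2}$, I would first write
\[
\|B(u,u)(t)\|_{d/2}\le C\int_0^t (t-s)^{-1/2}\,\|u(s)\,\mathbb{L}u(s)\|_{d/2}\dd s.
\]
The gradient costs exactly the factor $(t-s)^{-1/2}$, and no gain of integrability is available in this endpoint, so the product $u(s)\,\mathbb{L}u(s)$ must already live in $L^{d/2}$. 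This dictates the H\"older splitting: I would choose the exponent $q$ by $1/q=2/d-1/p$, so that $1/p+1/q=2/d$ and $\|u(s)\,\mathbb{L}u(s)\|_{d/2}\le \|u(s)\|_p\,\|\mathbb{L}u(s)\|_q$.

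The second step is to check that this $q$ is admissible for Lemma~\ref{BesL1}, i.e.\ that $0\le 1/q\le 1/p<1/q+1/d$. A direct computation shows that these inequalities reduce precisely to $2d/3<p\le d$, which is the standing hypothesis (in particular $1/q\ge0$ is automatic). I can then invoke Lemma~\ref{BesL1} with this pair $(p,q)$, whose $\tau$-exponent $-1/2+\tfrac d2(1/p-1/q)$ collapses, after inserting $1/q=2/d-1/p$, to exactly $-3/2+d/p$. This is the source of the claimed power of $\tau$.

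For the time integrability I would use the two scalings carefully: $\|u(s)\|_p\lesssim \III{u}_p\, s^{-1+d/(2p)}$ from $u\in\mathcal{E}_p$, and, crucially, the gradient-adapted decay $\|\mathbb{L}u(s)\|_q\lesssim \tau^{-3/2+d/p}\,\III{u}_p\, s^{-1/2+d/(2q)}$ that the proof of Lemma~\ref{BesL1} actually produces (this is half a power less singular than the naive $\mathcal{E}_q$-weight, reflecting that $\mathbb{L}u$ scales like $\nabla\varphi$ rather than like $u$). Multiplying, the $s$-exponent becomes $-3/2+\tfrac d2(1/p+1/q)=-1/2$, so the remaining integral is $\int_0^t(t-s)^{-1/2}s^{-1/2}\dd s=B(\tfrac12,\tfrac12)=\pi$, a constant independent of $t$. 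Collecting the constants yields the stated bound, with $\III{u}_p^2$ in place of $\|u\|_p^2$.

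The main obstacle is precisely the borderline nature of this estimate: because $d/2$ is the critical exponent, both factors $(t-s)^{-1/2}$ and $s^{-1/2}$ sit exactly at the edge of integrability, and it is only the use of the sharper time weight for $\mathbb{L}u$ (rather than the $\mathcal{E}_q$-weight $s^{-1+d/(2q)}$, which would yield a non-integrable $s^{-1}$) that makes the integrand integrable and, simultaneously, forces the final $t$-power to vanish. Verifying that the admissibility window for $q$ matches exactly the hypothesis $2d/3<p\le d$ is the other point requiring attention, but it is a routine computation.
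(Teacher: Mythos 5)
Your proof is correct and follows essentially the same route as the paper's: the endpoint gradient heat estimate into $L^{d/2}$, H\"older with $1/p+1/q=2/d$, the bound $\Vert\mathbb{L}u(s)\Vert_q\lesssim\tau^{-1/2+\frac d2(1/p-1/q)}s^{-1/2+d/(2q)}\III{u}_p$ from (the proof of) Lemma~\ref{BesL1}, and the resulting Beta integral $\int_0^t(t-s)^{-1/2}s^{-1/2}\ds$. Your explicit verification that admissibility of $q$ is exactly the standing hypothesis $2d/3<p\le d$, and your remark that the conclusion should read $\III{u}_p^2$, are both accurate and consistent with the paper.
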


\begin{proof}
First using H{\"o}lder's inequality and then Lemma~\ref{BesL1} for $2/d=1/p+1/q$, we get
\begin{align*}
& \Vert B(u,u)(t) \Vert_{d/2} \le C \int_{0}^{t}{(t-s)^{-1/2} \Vert u(s) \mathbb{L}u(s) \Vert_{d/2} \ds} \le C \int_{0}^{t}{(t-s)^{-1/2} s^{-1+d/2p} \Vert \mathbb{L}u(s) \Vert_{q} \ds} \cdot \III{u}_p
\\
& \le C \Big( \int_{0}^{t}{(t-s)^{-1/2} s^{-1+d/2p} s^{-1/2+d/2q} \ds} \Big) \tau^{-1/2+d/2(1/p-1/q)}\III{u}_p^2 =C \tau^{-3/2+d/p} \III{u}_p^2.
\end{align*}
\end{proof} 
In particular, if $u \in {\mathcal E}_p$, then $B(u,u) \in L^{\infty}(0,\infty;L^{d/2})$.
Before proving the two time-continuity lemmas, let us point out that there exists a constant $C_{\tau}>0$ such that we have
\begin{equation} \label{uLu1}
\Vert u(\sigma) \mathbb{L}u(\sigma) \Vert_{d/2} \le C_{\tau} \sigma^{-1/2}  \III{u}_p^2
\end{equation}
which follows from  the computation above.

\begin{lemma} \label{BesL5}
If $u \in {\mathcal E}_p$, then $B(u,u) \in {\mathcal C}((0,\infty),L^{d/2})$.
\end{lemma}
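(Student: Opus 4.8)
The plan is to show that the bilinear term $B(u,u)(t)$ defines a continuous $L^{d/2}$-valued function on $(0,\infty)$ by proving continuity at each fixed time $t_0>0$. I would fix $t_0>0$ and estimate $\|B(u,u)(t)-B(u,u)(t_0)\|_{d/2}$ as $t\to t_0$, splitting the difference into the part coming from the change in the integration domain and the part coming from the change in the heat kernel acting on the (fixed) integrand $u(s)\,\mathbb{L}u(s)$. The key input is the pointwise bound~\eqref{uLu1}, which gives $\|u(\sigma)\mathbb{L}u(\sigma)\|_{d/2}\le C_\tau\,\sigma^{-1/2}\III{u}_p^2$; this says the integrand of $B(u,u)$ lies in $L^1_{\rm loc}(0,\infty;L^{d/2})$ with an integrable singularity at $\sigma=0$, which is exactly what is needed to run a dominated-convergence argument.

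More concretely, I would first treat $t>t_0$ and write
\begin{equation*}
B(u,u)(t)-B(u,u)(t_0)=
-\int_{t_0}^{t}\nabla {\rm e}^{(t-s)\Delta}\cdot\bigl(u(s)\mathbb{L}u(s)\bigr)\ds
-\int_{0}^{t_0}\nabla\bigl({\rm e}^{(t-s)\Delta}-{\rm e}^{(t_0-s)\Delta}\bigr)\cdot\bigl(u(s)\mathbb{L}u(s)\bigr)\ds.
\end{equation*}
For the first integral I would apply the $L^r$--$L^{d/2}$ smoothing bound for $\nabla {\rm e}^{(t-s)\Delta}$ from~\eqref{heat} together with~\eqref{uLu1}; since the singularity $(t-s)^{-1/2}s^{-1/2}$ is integrable near both endpoints, this term is $O((t-t_0)^{1/2})$ and hence vanishes as $t\searrow t_0$. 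For the second integral, on the fixed interval $(0,t_0)$ the integrand $\nabla\bigl({\rm e}^{(t-s)\Delta}-{\rm e}^{(t_0-s)\Delta}\bigr)\cdot(u(s)\mathbb{L}u(s))$ tends to $0$ in $L^{d/2}$ for a.e.\ $s$ by strong continuity of the heat semigroup on $L^{d/2}$ (or $L^r$ followed by smoothing), while it is dominated, uniformly for $t$ near $t_0$, by an integrable function of $s$ built from~\eqref{heat} and~\eqref{uLu1}; dominated convergence then finishes it. The case $t<t_0$ is symmetric.

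The main obstacle will be producing the uniform integrable dominating function for the semigroup-difference integral near the lower endpoint $s=0$, where $\mathbb{L}u(s)$ is singular: one must check that the bound coming from~\eqref{heat} applied to $\nabla {\rm e}^{(t-s)\Delta}$ does not itself blow up faster than $s^{-1/2}$ is integrable, i.e.\ that the combined singularity $(t-s)^{-1/2-\epsilon}s^{-1/2}$ (after choosing an auxiliary exponent $r$ with $1/r=2/d$ so that no extra spatial smoothing factor is needed, giving exponent $-1/2$) stays integrable on $(0,t_0)$ uniformly as $t\to t_0$. Once the dominating function is in hand, continuity follows routinely, and I expect no difficulty in the semigroup strong-continuity step itself since $u(s)\mathbb{L}u(s)\in L^{d/2}$ for a.e.\ fixed $s$.
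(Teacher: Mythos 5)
Your proposal is correct and follows essentially the same route as the paper's proof: the same splitting of $B(u,u)(t)-B(u,u)(t_0)$ into a kernel-difference integral over $(0,t_0)$ (handled by dominated convergence, with the bound \eqref{uLu1} and the $L^1$ kernel estimate providing the dominating function $2(t_0-s)^{-1/2}s^{-1/2}$) and a domain-change integral over $(t_0,t)$. The only cosmetic difference is that you bound the latter directly by $O\bigl((t-t_0)^{1/2}\bigr)$, whereas the paper shows it vanishes via the change of variables $\sigma=t\tilde\sigma$; both arguments are equivalent here.
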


\begin{proof}
For $t \ge s >0$, $B(u,u)(t)-B(u,u)(s)=I_1+I_2$ with
$$
I_1=-\int_{0}^{s}{(\nabla {\rm e}^{(t-\sigma)\Delta}-\nabla {\rm e}^{(s-\sigma)\Delta})u(\sigma) \mathbb{L}u(\sigma) \dsigma},\ I_2=-\int_{s}^{t}{\nabla {\rm e}^{(t-\sigma) \Delta}(u(\sigma) \mathbb{L}u(\sigma)) \dsigma}.
$$
We have (denoting by $G$ the Gaussian kernel)
$$
\Vert I_1 \Vert_{d/2} \le \int_{0}^{s}{\underbrace{\Vert \nabla G(\cdot,t-\sigma) - \nabla G(\cdot,s-\sigma) \Vert_1}_{\rightarrow 0\text{ as } t \rightarrow s}  
\underbrace{\Vert u(\sigma) \mathbb{L}u(\sigma) \Vert_{d/2}}_{\le C_{\tau} \sigma^{-1/2} \text{ by } (\ref{uLu1})} \dsigma}
$$
Moreover,
$$
\Vert \nabla G(\cdot,t-\sigma) - \nabla G(\cdot,s-\sigma) \Vert_1 \le (t-\sigma)^{-1/2}+(s-\sigma)^{-1/2} \le 2(s-\sigma)^{-1/2},
$$
and
$$
\sigma \mapsto 2(s-\sigma)^{-1/2}\sigma^{-1/2} \in L^1(0,s).
$$
By the dominated convergence theorem, $\Vert I_1 \Vert_{d/2} \rightarrow 0$ as $t \searrow s$.
\\
On the other hand, for $s>0$, making the change of variables $\sigma=t\tilde{\sigma}$, we obtain that as $t \searrow s$,
\begin{align*}
\Vert I_2 \Vert_{d/2} &\le \Big( C \int_{s}^{t}{(t-\sigma)^{-1/2} \sigma^{-1/2}\dsigma} \Big) \tau^{-3/2+d/p} \III{u}_p^2 \\
&= \Big( C \int_{s/t}^{1}{(1-\tilde{\sigma})^{-1/2} \tilde{\sigma}^{-1/2}\ {\rm d}\tilde{\sigma}} \Big) \tau^{-3/2+d/p} \III{u}_p^2 \rightarrow 0.
\end{align*}
\end{proof}

\begin{lemma} \label{BesL6}
Let $u \in {\mathcal E}_p$. Then $B(u,u)(t) \rightarrow 0$ as $t \searrow 0$, in the sense of distributions, i.e. in the space $\mathscr D'$.
\end{lemma}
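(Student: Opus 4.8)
The plan is to test $B(u,u)(t)$ against an arbitrary $\psi\in\mathscr D(\R^d)$ and to prove that $\langle B(u,u)(t),\psi\rangle\to0$ as $t\searrow0$, which is exactly what convergence in $\mathscr D'$ means. First I would observe that $B(u,u)(t)\in L^{d/2}$ by Lemma~\ref{BesL4}, so the pairing is well defined. Starting from the definition~\eqref{B}, in which the vector field is $u(s)\mathbb{L}u(s)$, I would move the divergence and the heat semigroup onto the (smooth, compactly supported) test function, using integration by parts and the self-adjointness of $e^{(t-s)\Delta}$, together with $e^{(t-s)\Delta}\nabla\psi=\nabla e^{(t-s)\Delta}\psi$. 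This yields
\[
\langle B(u,u)(t),\psi\rangle=\int_0^t\langle u(s)\mathbb{L}u(s),\,\nabla e^{(t-s)\Delta}\psi\rangle\ds.
\]
The transfer is legitimate because $u(s)\mathbb{L}u(s)\in L^{d/2}$ is paired with the Schwartz-class function $\nabla e^{(t-s)\Delta}\psi$, so each term is an honest integral.

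The second step is to bound the integrand by H\"older's inequality in the dual pair $(d/2,(d/2)')$. For the first factor I would invoke~\eqref{uLu1}, namely $\|u(s)\mathbb{L}u(s)\|_{d/2}\le C_\tau\,s^{-1/2}\III{u}_p^2$. For the second factor the key point is that $\nabla\psi\in L^{(d/2)'}(\R^d)$: since $\psi$ is a test function, $\nabla\psi$ is smooth with compact support, hence lies in every $L^r$, in particular for $r=(d/2)'$, which equals $\tfrac{d}{d-2}$ when $d\ge3$ and $\infty$ when $d=2$. Combining this with the contractivity of the heat semigroup on $L^{(d/2)'}$ gives the uniform-in-$s$ bound
\[
\|\nabla e^{(t-s)\Delta}\psi\|_{(d/2)'}=\|e^{(t-s)\Delta}\nabla\psi\|_{(d/2)'}\le\|\nabla\psi\|_{(d/2)'}.
\]

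Putting the two estimates together, I would obtain
\[
|\langle B(u,u)(t),\psi\rangle|\le C_\tau\,\III{u}_p^2\,\|\nabla\psi\|_{(d/2)'}\int_0^t s^{-1/2}\ds=2\,C_\tau\,\III{u}_p^2\,\|\nabla\psi\|_{(d/2)'}\,\sqrt t,
\]
which tends to $0$ as $t\searrow0$. Since $\psi\in\mathscr D(\R^d)$ is arbitrary, this establishes $B(u,u)(t)\to0$ in $\mathscr D'$.

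I do not expect a genuine obstacle: the analytic heart of the matter is already contained in~\eqref{uLu1}, and what remains is essentially bookkeeping. The two points deserving care are (i) justifying the integration by parts that moves both $\nabla\cdot$ and $e^{(t-s)\Delta}$ onto $\psi$, which is licit precisely because $u(s)\mathbb{L}u(s)\in L^{d/2}$ and $\nabla e^{(t-s)\Delta}\psi$ is Schwartz, and (ii) selecting the dual exponent $(d/2)'$ and noting that $\nabla\psi$ belongs to it thanks to the compact support of $\psi$. The uniform control of the test-function factor is what reduces the $s$-integral to the harmless $\int_0^t s^{-1/2}\ds$, so there is no difficulty near the endpoint $s=t$.
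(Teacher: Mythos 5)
Your proof is correct and follows essentially the same route as the paper's: test against $\phi\in\mathscr{D}$, move the divergence onto the test function, apply H\"older in the dual pair $(d/2,(d/2)')$ together with the key bound~\eqref{uLu1}, and conclude via $\int_0^t s^{-1/2}\ds=2\sqrt{t}\to0$. The only cosmetic difference is that you also transfer the heat semigroup onto $\nabla\phi$ (using its contractivity on $L^{(d/2)'}$), whereas the paper leaves it on $u\,\mathbb{L}u$ and uses contractivity on $L^{d/2}$; the two are equivalent by self-adjointness.
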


\begin{proof}
Consider a test function $\phi \in {\mathcal C}_0^{\infty}(\R^d)$. Then, for $t \searrow 0$, denoting $r$ the conjugate of $d/2$, i.e. $1/r+2/d=1$, by \eqref{uLu1} we get
\begin{align*}
& \Big| \int_{\R^d}{B(u,u)(x,t) \phi(x) \dx} \Big| = \Big| \int_{0}^{t} \int_{\R^d}{ {\rm e}^{(t-\sigma) \Delta} (u(\sigma) Lu(\sigma))(x) \nabla\phi(x) \dx \dsigma}  \Big|
\\
& \le C_{\tau} \int_{0}^{t}{\Vert u(\sigma) \mathbb{L}u(\sigma) \Vert_{d/2} \Vert \nabla \phi \Vert_{r}}\dsigma \le C_{\tau} \Big( \int_{0}^{t}{\sigma^{-1/2} \dsigma} \Big) \Vert \nabla \phi \Vert_{r} \rightarrow 0.
\end{align*}
\end{proof}

Now let $u_0 \in \dot{B}_{d,\infty}^{-1}$. Theorem~\ref{BesT1} applies with $p=d$ and any $q$ satisfying $2 \le d<q<\infty$. Therefore, for any such $q$ there exists $C_q>0$ such that if 
$\Vert u_0 \Vert_{\dot{B}_{d,\infty}^{-1}} \le C_q \tau^{d/2q}$, the solution $u \in {\mathcal E}_d$ built in this theorem satisfies
$$
u={\rm e}^{t\Delta}u_0+B(u,u),\ B(u,u) \in BC((0,\infty),L^{d/2});\ B(u,u)(t) \rightarrow 0\  \text{\ in\ } {\mathscr D}',\  {\rm as\ }t \searrow 0.
$$
Now assume, more precisely, that 
$$
u_0 \in M^{d/2} \cap {\dot{B}_{d,\infty}^{-1}},\ \Vert u_0 \Vert_{\dot{B}_{d,\infty}^{-1}} \le C_q \tau^{d/2q}.
$$
Then ${\rm e}^{\cdot\Delta}u_0 \in BC((0,\infty),M^{d/2})$  \cite{G-M}, and $B(u,u) \in BC((0,\infty),L^{d/2}) \subset BC((0,\infty),M^{d/2}) $. 
Moreover, as $t \searrow 0$, ${\rm e}^{t\Delta}u_0 \rightarrow u_0$ in ${\mathscr D'}$, and $B(u,u)(t) \rightarrow 0$ in ${\mathscr D'}$.

Summarizing the statements above, we get the following modification of Theorem~\ref{BesT1}:

\begin{theorem} \label{BesT2}
For any $q$ satisfying $2 \le d<q<\infty$ and any $\tau>0$, there exists $C_{q,d}>0$ (independent of $\tau$) such that if
$$
u_0 \in M^{d/2} \cap {\dot{B}_{d,\infty}^{-1}};\ \Vert u_0 \Vert_{\dot{B}_{d,\infty}^{-1}} \le C_{q,d} \tau^{d/2q},
$$
then there exists a mild solution $u \in {\mathcal E}_d \cap BC((0,\infty),M^{d/2})$ to {\rm (PP)} such that $u(t) \rightarrow u_0$ in ${\mathscr D'}$ as $t \searrow 0$ and 
$$
\III{u}_d \le 2 \Vert u_0 \Vert_{\dot{B}_{d,\infty}^{-1}} \le 2 C_{q,d} \tau^{d/2q}.
$$
A uniqueness class for mild solutions is the ball $\left\{ v \in {\mathcal E}_d:\ \III{v}_d < 2 C_{q,d} \tau^{d/2q} \right\}$.
\end{theorem}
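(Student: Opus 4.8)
The plan is to assemble the theorem from three building blocks already in place: the fixed-point existence and uniqueness of Theorem~\ref{BesT1}, the $L^{d/2}$ time-continuity estimates of Lemmas~\ref{BesL4}--\ref{BesL6}, and the mapping properties of the heat semigroup on the Morrey space $M^{d/2}$.

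First I would specialize Theorem~\ref{BesT1} to $p=d$. For this choice the exponent in the smallness condition collapses, since $\frac12-\frac d2(\frac1d-\frac1q)=\frac{d}{2q}$, so the hypothesis $\|u_0\|_{\dot B_{d,\infty}^{-1}}\le C_{q,d}\tau^{d/2q}$ is precisely the one required to produce, via the fixed-point lemma, a mild solution $u\in\mathcal E_d$ of $u=U_0+B(u,u)$ that is unique in the ball $\{v\in\mathcal E_d:\III{v}_d<2C_{q,d}\tau^{d/2q}\}$. The compatibility range $|1/d-1/d|<1/q\le\min(1/d,1-1/d)$, $1/q<1/d$ reduces to $d<q<\infty$ with $d\ge2$, matching the stated hypothesis $2\le d<q<\infty$. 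The fixed-point lemma yields $\III{u}_d$ at most twice $\III{U_0}_d$, and by Lemma~\ref{BesL3} $\III{U_0}_d$ is comparable to $\|u_0\|_{\dot B_{d,\infty}^{-1}}$; taking this equivalent quantity as the Besov norm gives $\III{u}_d\le 2\|u_0\|_{\dot B_{d,\infty}^{-1}}$. This already delivers the $\mathcal E_d$ membership, the size bound, and the uniqueness class.

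Next I would upgrade the regularity of the nonlinear term. By Lemma~\ref{BesL4}, $B(u,u)(t)$ is bounded in $L^{d/2}$ uniformly in $t$, and by Lemma~\ref{BesL5} it is continuous from $(0,\infty)$ into $L^{d/2}$; hence $B(u,u)\in BC((0,\infty),L^{d/2})$. The continuous inclusion $L^{d/2}(\R^d)\hookrightarrow M^{d/2}(\R^d)$ then gives $B(u,u)\in BC((0,\infty),M^{d/2})$. For the linear part, the extra hypothesis $u_0\in M^{d/2}$ together with the known fact that ${\rm e}^{\cdot\Delta}$ maps $M^{d/2}$ into $BC((0,\infty),M^{d/2})$ (\cite{G-M}) gives $U_0\in BC((0,\infty),M^{d/2})$. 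Summing the two contributions in $u=U_0+B(u,u)$ yields $u\in\mathcal E_d\cap BC((0,\infty),M^{d/2})$.

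Finally I would verify attainment of the initial datum: as $t\searrow0$, ${\rm e}^{t\Delta}u_0\to u_0$ in $\mathscr D'$ by the standard continuity of the heat semigroup on tempered distributions, while $B(u,u)(t)\to0$ in $\mathscr D'$ by Lemma~\ref{BesL6}, so $u(t)\to u_0$ in $\mathscr D'$. I expect the only genuinely non-routine ingredient to be the invocation of~\cite{G-M}: unlike in the Lebesgue or Besov setting, the heat semigroup is not strongly continuous up to $t=0$ on the non-separable space $M^{d/2}$, so one can only assert continuity on the open half-line $(0,\infty)$, and the datum is recovered merely in the weak topology of $\mathscr D'$. Everything else is a bookkeeping assembly of the previously established lemmas together with the elementary inclusion $L^{d/2}\hookrightarrow M^{d/2}$.
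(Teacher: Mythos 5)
Your proposal is correct and follows essentially the same route as the paper: specializing Theorem~\ref{BesT1} to $p=d$ (where the smallness exponent becomes $\tau^{d/2q}$), using Lemmas~\ref{BesL4}--\ref{BesL6} to place $B(u,u)$ in $BC((0,\infty),L^{d/2})$ with distributional vanishing at $t=0$, invoking the inclusion $L^{d/2}\hookrightarrow M^{d/2}$ together with the result of \cite{G-M} for ${\rm e}^{t\Delta}u_0$ in $M^{d/2}$, and recovering the initial datum only in $\mathscr{D}'$. Your remark on the lack of strong continuity of the heat semigroup on $M^{d/2}$ at $t=0$, and your handling of the factor $2$ via the heat-flow characterization of the Besov norm, are consistent with what the paper leaves implicit.
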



\section*{Acknowledgments} 
The authors thank 
Lucilla Corrias, Pierre-Gilles Lemari\'e, Miko{\l}aj Sier\.z\c{e}ga and Philippe Souplet for interesting conversations on the topic of our work. 

The first named author would like to thank Institut Camille Jordan, Universit\'e Claude Bernard-Lyon~1 for hospitality during his sabbatical stay (Sep 2021--Jan 2022) as a fellow of {\em Institut des \'Etudes Avanc\'ees -- Collegium de Lyon}, partially supported by  the Polish NCN grant \hbox{2016/23/B/ST1/00434}.

\end{document}